\def\XXint#1#2#3{{\setbox0=\hbox{$#1{#2#3}{\int}$}
\vcenter{\hbox{$#2#3$}}\kern-.5\wd0}}
\newtheorem{theorem}{Theorem}[section]
\newtheorem{proposition}{Proposition}[section]
\newtheorem{lemma}{Lemma}
\newtheorem{corollary}{Corollary}
\theoremstyle{remark}
\def\({\left(}
\def\){\right)}
\def\be {\begin{equation}}
\def\en{\end{equation}}
\def\lone{\mathcal{L}_1}
\def\ltwo{\mathcal{L}_2}
\def\babsl{\left \lvert}
\def\babsr{\right \rvert}
\def\Cdot{{\dot{C}}}
\def\Li{{\mathcal{L}_1}}
\def\div{\text{div}~}
\newcommand{\tensor}{\otimes}
\numberwithin{equation}{section}
\begin{document}

%\dedicatory{}

%\subjclass[2010]{Primary 76D05, 35A02}

\keywords{Navier-Stokes, global existence}

\address{Nathan Pennington, Department of Mathematics, Creighton University, 319 Eppley Hall
Omaha, NE, USA.} \email{nathanpennington@creighton.edu}

%\thanks{}

\title[Generalized Leray-alpha equations]{Local and Global low-regularity solutions to the Generalized Leray-alpha equations}
\author{Nathan Pennington}
\keywords{Leray-alpha model, Besov space, fractional Laplacian}
\date{\today}

\maketitle

\bigskip

\begin{abstract}It has recently become common to study many different approximating equations of the Navier-Stokes equation.  One of these is the Leray-$\alpha$ equation, which regularizes the Navier-Stokes equation by replacing (in most locations) the solution $u$ in the equation with $(1-\alpha^2\triangle)u$ the operator $(1-\alpha^2\triangle)$.  Another is the generalized Navier-Stokes equation, which replaces the Laplacian with a Fourier multiplier with symbol of the form $|\xi|^\gamma$ ($\gamma=2$ is the standard Navier-Stokes equation), and recently in \cite{taolog} Tao also considered multipliers of the form $|\xi|^\gamma/g(|\xi|)$, where $g$ is (essentially) a logarithm.  The generalized Leray-$\alpha$ equation combines these two modifications by incorporating the regularizing term and replacing the Laplacians with more general Fourier multipliers, including allowing for $g$ terms similar to those used in \cite{taolog}.  Our goal in this paper is to obtain existence and uniqueness results with low regularity and/or non-$L^2$ initial data.  We will also use energy estimates to extend some of these local existence results to global existence results.

\end{abstract}

\section{Introduction}

The incompressible form of the Navier-Stokes equation is given by
\begin{equation}\label{NS}\aligned \partial_t u + (u\cdot \nabla)u=\nu\triangle u-\nabla p,
\\ u(0,x)=u_0(x), ~~\div (u)=0
\endaligned
\end{equation}
where $u:I\times \mathbb{R}^n\rightarrow \mathbb{R}^n$ for some time strip $I=[0,T)$, $\nu>0$ is a constant due to the viscosity of the fluid, $p:I\times\mathbb{R}^n\rightarrow \mathbb{R}^n$ denotes the fluid pressure, and $u_0:\mathbb{R}^n\rightarrow\mathbb{R}^n$.  The requisite differential operators are defined by $\triangle=\sum_{i=1}^n \frac{\partial^2}{\partial_{x_i}^2}$ and $\nabla=\(\frac{\partial} {\partial_{x_i}},...,\frac{\partial}{\partial_{x_n}}\)$.  

In dimension $n=2$, local and global existence of solutions to the Navier-Stokes equation are well known (see \cite{lady356}; for a more modern reference, see Chapter $17$ of \cite{T3}).  For dimension $n\geq 3$, the problem is significantly more complicated.  There is a robust collection of local existence results, including \cite{Kato}, in which Kato proves the existence of local solutions to the Navier-Stokes equation with initial data in $L^n(\mathbb{R}^n)$; \cite{KP}, where Kato and Ponce solve the equation with initial data in the Sobolev space  $H^{n/p-1,p}(\mathbb{R}^n)$; and \cite{KT}, where Koch and Tataru establish local existence with initial data in the space $BMO^{-1}(\mathbb{R}^n)$ (for a more complete accounting of local existence theory for the Navier-Stokes equation, see \cite{nsbook}).  In all of these local results, if the initial datum is assumed to be sufficiently small, then the local solution can be extended to a global solution.   However, the issue of global existence of solutions to the Navier-Stokes equation in dimension $n\geq 3$ for arbitrary initial data is one of the most challenging open problems remaining in analysis.

Because of the intractability of the Navier-Stokes equation, many approximating equations have been studied.  One of these is the Leray-$\alpha$ model, which is 
\begin{equation*}\aligned \label{leray1}\partial_t(1-\alpha^2\triangle )u+\nabla_u (1-\alpha^2 \triangle) u-\nu\triangle (1-\alpha^2 \triangle )u=-\nabla p,   
\\ u(0,x)=u_0(x), ~~ \div u_0=\div u=0.
\endaligned
\end{equation*}
Note that setting $\alpha=0$ returns the standard Navier-Stokes equation.  Like the Lagrangian Averaged Navier-Stokes equation (which differs from the Leray-$\alpha$ in the presence of an additional nonlinear term), the system $(\ref{leray1})$ compares favorably with numerical data; see \cite{holmleray}, in which the authors compared the Reynolds numbers for the Leray-$\alpha$ equation and the LANS equation with the Navier-Stokes equation.

Another commonly studied equation is the generalized Navier-Stokes equation, given by 
\begin{equation*}\label{NS}\aligned \partial_t u + (u\cdot \nabla)u=\nu\mathcal{L} u-\nabla p,
\\ u(0,x)=u_0(x), ~~\div (u)=0
\endaligned
\end{equation*}
where $\mathcal{L}$ is a Fourier multiplier with symbol $m(\xi)=-|\xi|^\gamma$ for $\gamma>0$.  Choosing $\gamma=2$ returns the standard Navier-Stokes equation.  In \cite{Wu}, Wu proved (among other results) the existence of unique local solutions for this equation provided the data is in the Besov space $B^s_{p,q}(\mathbb{R}^n)$ with $s=1+n/p-\gamma$ and $1<\gamma\leq 2$.   If the norm of the initial data is sufficiently small, these local solutions can be extended to global solutions.

It is well known that if $\gamma\geq \frac{n+2}{2}$, then this equation has a unique global solution.  In \cite{taolog}, Tao strengthened this result, proving global existence with the symbol $m(\xi)=-|\xi|^\gamma/g(|\xi|)$, with $\gamma\geq \frac{n+2}{2}$ and $g$ a non-decreasing, positive function that satisfies 
\begin{equation*}\int_1^\infty \frac{ds}{sg_1(s)^2}=+\infty.
\end{equation*}
Note that $g(|x|)=\log^{1/2}(2+|x|^2)$ satisfies the condition.  Similar types of results involving $g$ terms that are, essentially, logs have been proven for the nonlinear wave equation; see \cite{taolog} for a more detailed description.

Here we consider a combination of these two models, called the generalized Leray-$\alpha$ equation, which is 
\begin{equation}\aligned \label{leray1}\partial_t(1-\alpha^2 \ltwo )u+\nabla_u (1-\alpha^2  \ltwo) u-\nu\lone (1-\alpha^2 \ltwo )u=-\nabla p,   
\\ u(0,x)=u_0(x), ~~ \div u_0=\div u=0,
\endaligned
\end{equation}
with the operators $\mathcal{L}_i$ defined by 
\begin{equation*}\mathcal{L}_iu(x)=\int -\frac{|\xi|^{\gamma_i}}{g_i(\xi)}\hat{u}(\xi)e^{ix\cdot \xi}d\xi,
\end{equation*}
where $g_i$ are radially symmetric, nondecreasing, and bounded below by $1$.  Note that choosing $g_1=g_2=1$, $\gamma_1=2$, and $\gamma_2=0$ returns the Navier-Stokes equation, choosing $g_1=g_2=1$ and $\gamma_1=\gamma_2=2$ gives the Leray-$\alpha$ equation, and choosing $g_2=\gamma_2=1$ returns the generalized Navier-Stokes equation.   

In \cite{kazuo}, Kazuo obtains a unique global solution to equation $(\ref{leray1})$ in dimension three provided the initial data is in the Sobolev space $H^{m,2}(\mathbb{R}^3)$, with $m>\max\{5/2, 1+2\gamma_1\}$, provided $\gamma_1$ and $\gamma_2$ satisfy the inequality $2\gamma_1+\gamma_2\geq 5$ and that
\begin{equation}\label{g restriction} \int_1^\infty \frac{ds}{s g_1^2(s)g_2(s)} =\infty.
\end{equation}

The goal of this paper is to obtain a much wider array of existence results, specifically existence results for initial data with low regularity and for initial data outside the $L^2$ setting.  We will also, when applicable, use the energy bound from \cite{kazuo} to extend these local solutions to global solutions.  Our plan is to follow the general contraction-mapping based procedure outlined by Kato and Ponce in \cite{KP} for the Navier-Stokes equation, with two key modification.  

First, the approach used in \cite{KP} relies heavily on operator estimates for the heat kernel $e^{t\triangle}$.  We will require similar estimates for our solution operator $e^{t\mathcal{L}_1}$, and establishing these estimates is the topic of Section $\ref{operator est}$.   This will require some technical restrictions on the choices of $g_1$ and $g_2$ that will be more fully addressed below.  We also note that these estimates are the main theoretical obstacle to applying these techniques to other equations, like the generalized MHD equation studied in \cite{generalizedmhd} and the Navier-Stokes like equation studied in \cite{olsontitialphalike}.

The second modification is in how we will deal with the nonlinear term.  For the first set of results, we will use the standard Leibnitz-rule estimate to handle the nonlinear terms.  Our second set of results rely on a product estimate (due to Chemin in \cite{chemin}) which will allow us to obtain lower regularity existence but will (among other costs) require us to work in Besov spaces.  The advantages and disadvantages of each approach will be detailed later in this introduction.  The product estimates themselves are stated as Proposition $\ref{product est 1}$ and Proposition $\ref{product est 2}$ in Section $\ref{Besov space}$ below.

The rest of this paper is organized as follows.  The remainder of this introduction is devoted to stating and contextualizing the main results of the paper.  Section $\ref{Besov space}$ reviews the basic construction of Besov spaces and states some foundational results, including our two product estimates.  In Section $\ref{type 1}$ we carry out the existence argument using the standard product estimate, and in Section $\ref{type 2}$ we obtain existence results using the other product estimate.  As stated above, Section $\ref{operator est}$ contains the proofs of the operator estimates that are central to the arguments used in Sections $\ref{type 1}$ and $\ref{type 2}$.  

Our last task before stating the main results is to establish some notation.  First, we denote Besov spaces by $B^s_{p,q}(\mathbb{R}^n)$, with norm denoted by $\|\cdot\|_{B^s_{p,q}}=\|\cdot\|_{s,p,q}$ (a complete definition of these spaces can be found in Section $\ref{Besov space}$).  We define the space
\begin{equation*}C^T_{a;s,p,q}=\{f\in
C((0,T):B^s_{p,q}(\mathbb{R}^n)):\|f\|_{a;s,p,q}<\infty\},
\end{equation*}
where
\begin{equation*}\|f\|_{a;s,p,q}=\sup\{t^a\|f(t)\|_{s,p,q}:t\in
(0,T)\},
\end{equation*}
$T>0$, $a\geq 0$, and $C(A:B)$ is the space of continuous functions from $A$ to $B$.  We let $\Cdot^T_{a;s,p,q}$ denote the subspace of $C^T_{a;s,p,q}$ consisting
of $f$ such that
\begin{equation*}\lim_{t\rightarrow 0^+}t^a f(t)=0
~\text{(in}~B^s_{p,q}(\mathbb{R}^n)).
\end{equation*}
Note that while the norm $\|\cdot\|_{a;s,p,q}$ lacks an explicit reference to $T$, there is an implicit $T$ dependence. We also say $u\in BC(A:B)$ if $u\in C(A:B)$ and $\sup_{a\in A}\|u(a)\|_{B}<\infty$.

Now we are ready to state the existence results.  For these results, $g_1$ and $g_2$ are required to satisfy technical conditions found in Section $\ref{operator est}$ (specifically equations $(\ref{mal condition})$ and $(\ref{mal 2 2}))$.  We remark that any Mikhlin multiplier bounded below by one will satisfy these two conditions (recall that $f$ is a Mihklin multiplier if $|f^{(k)}(r)|\leq C|r|^{-k}$).  

As expected in these types of arguments, the full result gives unique local solutions provided the parameters satisfy a large collection of inequalities.  Here we state special cases of the full results.  Our first Theorem is a special case of Theorem $\ref{full version 2}$ (see Section \ref{type 1}) and uses the standard product estimate (Proposition $\ref{product est 1}$ in Section $\ref{Besov space}$).
\begin{theorem}\label{old style short}Let $\gamma_1>1$, $\gamma_2>0$, $s_1$, $s_2$ and $p$ be real numbers such that 
$s_2>\gamma_2$, $0<s_2-s_1<\min\{\gamma_1/2, 1\}$ and $\gamma_1\geq s_2-s_1+1+n/p$.   We also assume that $g_1$ and $g_2$ satisfy equations $(\ref{mal condition})$ and $(\ref{mal 2 2})$.  Then for any divergence free $u_0\in B^{s_1}_{p,q}(\mathbb{R}^n)$, there exists a local solution $u$ to the generalized Leray-alpha equation $(\ref{leray1})$, with
\begin{equation*}u\in BC([0,T):B^{s_1}_{p,q}(\mathbb{R}^n))\cap \dot{C}^T_{a;s_2,p,q},
\end{equation*}
where $a=(s_2-s_1)/\gamma_1$.  $T$ can be chosen to be a non-increasing function of $\|u_0\|_{s_1, p,q}$ with $T=\infty$ if $\|u_0\|_{s_1, p, q}$ is sufficiently small.
\end{theorem}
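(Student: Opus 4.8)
The plan is to reformulate the generalized Leray-$\alpha$ equation $(\ref{leray1})$ as an integral (mild) equation adapted to the solution operator $e^{t\nu\mathcal{L}_1}$, and then run a Picard/contraction-mapping iteration in the space $E_T = BC([0,T):B^{s_1}_{p,q})\cap \dot{C}^T_{a;s_2,p,q}$ with $a=(s_2-s_1)/\gamma_1$. Applying the Leray (Helmholtz) projection $\mathbb{P}$ onto divergence-free fields eliminates the pressure, and inverting the elliptic operator $(1-\alpha^2\mathcal{L}_2)$, which is an order-$\gamma_2$ smoothing when applied on the left, one writes schematically
\begin{equation*}
u(t) = e^{t\nu\mathcal{L}_1}u_0 - \int_0^t e^{(t-\tau)\nu\mathcal{L}_1}\,(1-\alpha^2\mathcal{L}_2)^{-1}\,\mathbb{P}\,\nabla\cdot\bigl(u(\tau)\otimes w(\tau)\bigr)\,d\tau,
\end{equation*}
where $w=(1-\alpha^2\mathcal{L}_2)u$ carries $\gamma_2$ extra derivatives relative to $u$. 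I would define the nonlinear map $\Phi(u)$ by the right-hand side and show it is a contraction on a small ball of $E_T$.

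The key technical inputs are the operator estimates for $e^{t\nu\mathcal{L}_1}$ from Section $\ref{operator est}$ (valid under the conditions $(\ref{mal condition})$ and $(\ref{mal 2 2})$ on $g_1,g_2$), which should give, on Besov spaces, the smoothing bound $\|e^{t\nu\mathcal{L}_1}f\|_{s',p,q}\lesssim t^{-(s'-s)/\gamma_1}\|f\|_{s,p,q}$ for $s'\geq s$ together with the uniform bound $\|e^{t\nu\mathcal{L}_1}f\|_{s,p,q}\lesssim\|f\|_{s,p,q}$, plus continuity in $t$ and the $t\to 0^+$ behavior needed to land in the dotted space. The linear term $e^{t\nu\mathcal{L}_1}u_0$ is handled directly by these estimates: the uniform bound puts it in $BC([0,T):B^{s_1}_{p,q})$, the smoothing bound (with the gain $(s_2-s_1)/\gamma_1=a$) puts it in $C^T_{a;s_2,p,q}$, and an approximation argument (density of Schwartz functions, or splitting $u_0$) gives the vanishing of $t^a\|e^{t\nu\mathcal{L}_1}u_0\|_{s_2,p,q}$ as $t\to0^+$, hence membership in $\dot C^T_{a;s_2,p,q}$. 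For the nonlinear term I would estimate $u\otimes w$ using the standard Leibnitz-type product estimate (Proposition $\ref{product est 1}$): since $w$ has regularity $s_2-\gamma_2$ (one needs $s_2>\gamma_2$ so this is positive, or at least controllable) and one derivative is lost to $\nabla\cdot$, the integrand requires smoothing of order roughly $s_2 - (s_2-\gamma_2) + 1 - \gamma_2 +\dots$; matching the available exponent $\gamma_1$ against $s_2-s_1+1+n/p$ is exactly the hypothesis $\gamma_1\geq s_2-s_1+1+n/p$, while the constraint $0<s_2-s_1<\min\{\gamma_1/2,1\}$ ensures the time-weight integrals $\int_0^t (t-\tau)^{-b}\tau^{-c}\,d\tau$ that arise have both exponents strictly less than one and hence converge to something like $t^{1-b-c}$ with a positive power, which is what closes the fixed-point estimate and produces the factor of $T$ (or $\|u_0\|$) smallness.

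The heart of the argument is the bilinear estimate
\begin{equation*}
\|\Phi(u)-\Phi(v)\|_{E_T}\leq C\,T^{\theta}\bigl(\|u\|_{E_T}+\|v\|_{E_T}\bigr)\|u-v\|_{E_T}
\end{equation*}
for some $\theta>0$ depending on the gap $s_2-s_1$ and the other parameters; combined with $\|e^{t\nu\mathcal{L}_1}u_0\|_{E_T}\leq C\|u_0\|_{s_1,p,q}$ this yields a unique fixed point in a ball of radius $\sim\|u_0\|_{s_1,p,q}$ once $CT^\theta\|u_0\|_{s_1,p,q}$ is small, which is arranged either by taking $T$ small (for arbitrary data) or by taking $\|u_0\|_{s_1,p,q}$ small (allowing $T=\infty$, using instead the scale-invariant version of the estimate with no positive power of $T$ — available precisely when the exponents balance, i.e. $\gamma_1 = s_2-s_1+1+n/p$, and handled by an absorption argument in the borderline case). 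I would then note that $u_0$ divergence free and the structure of the equation (the projection $\mathbb{P}$) keep $u(t)$ divergence free for all $t$, and that a standard bootstrap upgrades the mild solution to a genuine solution of $(\ref{leray1})$. I expect the main obstacle to be the careful bookkeeping in the bilinear estimate — tracking how the $\gamma_2$-smoothing from $(1-\alpha^2\mathcal{L}_2)^{-1}$, the derivative loss from $\nabla\cdot\mathbb{P}$, and the $\gamma_1$-smoothing from $e^{t\nu\mathcal{L}_1}$ interact with the two different Besov norms ($s_1$ and $s_2$) defining $E_T$, and verifying that every resulting time-integral exponent stays in the convergent range — rather than any conceptual difficulty; the operator estimates of Section $\ref{operator est}$ are assumed in place, so the work here is purely the iteration scheme.
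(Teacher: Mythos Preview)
Your proposal is correct and follows essentially the same approach as the paper: mild formulation via Duhamel, contraction mapping in $BC([0,T):B^{s_1}_{p,q})\cap \dot C^T_{a;s_2,p,q}$, linear piece handled by the semigroup smoothing estimates of Section~\ref{operator est}, and the nonlinear piece controlled via Proposition~\ref{product est 1}. The one point you leave implicit is the mechanism by which $n/p$ enters the exponent condition $\gamma_1\geq s_2-s_1+1+n/p$: in the paper this comes from estimating $W(u,u)$ in $B^{r}_{p^*,q}$ at a \emph{lower} integrability index $p^*<p$ (so that Sobolev embedding $B^{s_2}_{p,q}\hookrightarrow L^{p_1}$ closes the product estimate at $\|u\|_{s_2,p,q}^2$) and then paying $t^{-(n/p^*-n/p)/\gamma_1}$ in the $L^{p^*}\!\to\! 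L^p$ smoothing of $e^{(t-s)\mathcal{L}_1}$, rather than from a fixed-$p$ smoothing bound alone.
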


Before stating our second theorem, we remark that this result also holds if the Besov spaces are replaced by Sobolev spaces.  This is not true of the next theorem, which is a special case of the more general Theorem $\ref{full generality}$, and relies on our second product estimate (Proposition $\ref{product est 2}$ in Section $\ref{Besov space}$).
\begin{theorem}\label{special case 1}Let $\gamma_1>1$, $\gamma_2>0$, $s_1$, $s_2$ and $p$ satisfy
\begin{equation*}\aligned 0&<s_2-s_1<\gamma_1/2,
\\ s_1&>\gamma_2-n/p-1,
\\ \gamma_1&\geq 2s_2-s_1-\gamma_2+n/p+1,
\\ n/p&>\gamma_2/2,
\\ s_2&\geq \gamma_2/2.
\endaligned
\end{equation*}
We also assume that $g_1$ and $g_2$ satisfy equations $(\ref{mal condition})$ and $(\ref{mal 2 2})$.  Then for any divergence free $u_0\in B^{s_1}_{p,q}(\mathbb{R}^n)$, there exists a local solution $u$ to the generalized Leray-alpha equation $(\ref{leray1})$, with
\begin{equation*}u\in BC([0,T):B^{s_1}_{p,q}(\mathbb{R}^n))\cap \dot{C}^T_{a;s_2,p,q},
\end{equation*}
where $a=(s_2-s_1)/\gamma_1$.  $T$ can be chosen to be a non-increasing function of $\|u_0\|_{s_1, p, q}$ with $T=\infty$ if $\|u_0\|_{s_1, p, q}$ is sufficiently small.
\end{theorem}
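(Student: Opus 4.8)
The theorem is the specialization of the more general Theorem~\ref{full generality}, and I would prove it by the contraction--mapping scheme of Kato--Ponce adapted to the operators $e^{\nu t\lone}$ and $(1-\alpha^2\ltwo)^{\pm1}$. First I would recast $(\ref{leray1})$ in mild form. Applying the Leray projection $\mathbb{P}$ onto divergence-free fields removes $\nabla p$; since $\mathbb{P}$, $e^{\nu t\lone}$ and $(1-\alpha^2\ltwo)^{-1}$ are (matrices of) Fourier multipliers they commute, so writing $w=(1-\alpha^2\ltwo)u$, solving $\partial_t w=\nu\lone w-\mathbb{P}\,\div(u\tensor w)$ by Duhamel and multiplying through by $(1-\alpha^2\ltwo)^{-1}$ gives
\begin{equation*}u(t)=e^{\nu t\lone}u_0-\int_0^t e^{\nu(t-\tau)\lone}(1-\alpha^2\ltwo)^{-1}\mathbb{P}\,\div\big(u\tensor(1-\alpha^2\ltwo)u\big)(\tau)\,d\tau=:\Phi(u)(t),\end{equation*}
where $\div u=0$ was used to write $(u\cdot\nabla)((1-\alpha^2\ltwo)u)=\div(u\tensor(1-\alpha^2\ltwo)u)$ and the Duhamel term preserves the divergence-free constraint. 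I would then seek a fixed point of $\Phi$ in a small closed ball of $X:=BC([0,T):B^{s_1}_{p,q}(\mathbb{R}^n))\cap\dot{C}^T_{a;s_2,p,q}$ with $a=(s_2-s_1)/\gamma_1$, normed by $\|u\|_X=\sup_{0<t<T}\|u(t)\|_{s_1,p,q}+\|u\|_{a;s_2,p,q}$; since $s_2>s_1$ the weight $t^a$ is genuine and $X$ is complete.

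For the linear part, the operator estimates of Section~\ref{operator est} --- which is exactly where the structural hypotheses $(\ref{mal condition})$ and $(\ref{mal 2 2})$ on $g_1,g_2$ are used --- provide bounds of the form $\|e^{\nu t\lone}f\|_{s',p,q}\lesssim t^{-(s'-s)/\gamma_1}\|f\|_{s,p,q}$ for $s'\ge s$ and $0<t\le T$, together with strong continuity at $t=0$ on a dense class. A standard density argument then gives $t\mapsto e^{\nu t\lone}u_0\in X$ with norm $\lesssim\|u_0\|_{s_1,p,q}$, and since $s_2>s_1$ one gets $t^a\|e^{\nu t\lone}u_0\|_{s_2,p,q}\to0$, so the linear part lies in $\dot{C}^T_{a;s_2,p,q}$.

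The crux is the bilinear estimate $\|B(u,v)\|_X\lesssim\|u\|_X\|v\|_X$, where $B$ is the Duhamel integral viewed as a bilinear form, together with the Lipschitz bound obtained from $B(u,u)-B(v,v)=B(u-v,u)+B(v,u-v)$. Inside the integral I would combine three ingredients: (i) the multiplier facts that $(1-\alpha^2\ltwo)^{-1}\mathbb{P}\,\div$ raises the $B^{\sigma}_{p,q}$-index by $\gamma_2-1$ and $(1-\alpha^2\ltwo)$ lowers it by $\gamma_2$, where the Mikhlin-type content of $(\ref{mal condition})$--$(\ref{mal 2 2})$ is what makes these operators behave like their symbols' orders; (ii) the product estimate Proposition~\ref{product est 2} applied to $u\tensor(1-\alpha^2\ltwo)v$, placing $u$ and $(1-\alpha^2\ltwo)v$ at appropriate regularities (for the $B^{s_2}$-output, $s_2$ and $s_2-\gamma_2$), for which the hypotheses $s_2\ge\gamma_2/2$, $n/p>\gamma_2/2$ and $s_1>\gamma_2-n/p-1$ are precisely the admissibility conditions; and (iii) the smoothing bound for $e^{\nu(t-\tau)\lone}$ to lift the resulting regularity back up to $s_1$ and to $s_2$. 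The factors of $\tau^{-a}$ coming from the $B^{s_2}_{p,q}$-norms of $u$ and $v$ leave an integrand $\sim(t-\tau)^{-\beta/\gamma_1}\tau^{-2a}$; integrability at $\tau=0$ forces $2a<1$, i.e.\ $s_2-s_1<\gamma_1/2$, integrability at $\tau=t$ bounds $\beta$, and requiring the power of $t$ surviving the integration (after multiplying by the output weight $t^a$) to be nonnegative --- so the bound is uniform on $(0,T)$ and survives $T\to\infty$ for small data --- produces the inequality $\gamma_1\ge2s_2-s_1-\gamma_2+n/p+1$. Once these bounds hold, choosing $T$ small as a non-increasing function of $\|u_0\|_{s_1,p,q}$ makes $\Phi$ a contraction on a small ball, the Banach fixed point theorem supplies $u$, the strictly positive power of $t$ just mentioned gives $B(u,u)\in\dot{C}^T_{a;s_2,p,q}$, and for $\|u_0\|_{s_1,p,q}$ small the estimates close with $T=\infty$.

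The main obstacle is precisely this bilinear step: one must route the $\gamma_2$-derivative loss of $(1-\alpha^2\ltwo)v$ and the $(\gamma_2-1)$-derivative gain of $(1-\alpha^2\ltwo)^{-1}\mathbb{P}\,\div$ through Chemin's product estimate and then through the fractional smoothing of $e^{\nu t\lone}$, keeping every intermediate index inside the admissible range for Proposition~\ref{product est 2} while keeping the two competing time singularities jointly integrable against the weights $\tau^{-2a}$; the five displayed inequalities encode exactly when all of this is simultaneously possible. A subsidiary but essential point --- discharged in Section~\ref{operator est} --- is that $g_1,g_2$ are only assumed radial, nondecreasing and $\ge 1$, so without $(\ref{mal condition})$ and $(\ref{mal 2 2})$ the operators $e^{\nu t\lone}$ and $(1-\alpha^2\ltwo)^{\pm1}$ need not obey the clean order counting the bilinear estimate relies on.
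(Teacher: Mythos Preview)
Your proposal is correct and follows the same route as the paper's Section~\ref{type 2}: mild formulation via Duhamel, contraction in $BC([0,T):B^{s_1}_{p,q})\cap\dot C^T_{a;s_2,p,q}$, the semigroup bounds of Section~\ref{operator est} for the linear piece, and Chemin's product estimate (Proposition~\ref{product est 2}) combined with the smoothing of $e^{(t-\tau)\lone}$ for the bilinear piece; your attribution of each of the five hypotheses to the right step is accurate.

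One technical caution: your parenthetical choice of placing $u$ at regularity $s_2$ and $(1-\alpha^2\ltwo)v$ at $s_2-\gamma_2$ in Proposition~\ref{product est 2} would force the admissibility condition $s_2<n/p$, which is \emph{not} among the hypotheses and would lose cases. The paper instead places the two factors at intermediate indices $r_1\approx\gamma_2/2$ and $r_2\approx-\gamma_2/2$ (both dominated by $s_2$ via the Besov embedding, so the factors are still controlled by $\|u\|_{s_2,p,q}$), lands $W(u,u)$ in an intermediate $B^{r}_{p,q}$, and only then uses the semigroup to lift to $B^{s_1}_{p,q}$ and $B^{s_2}_{p,q}$. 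This routing is what converts the Chemin constraint $r_1<n/p$ into $n/p>\gamma_2/2$ and produces $s_2\ge\gamma_2/2$ rather than the stronger $s_2<n/p$. With that adjustment your sketch matches the paper's proof.
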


We remark that in the first theorem, $\gamma_2$ can be arbitrarily large, but $s_1>-1$, while in the second theorem $\gamma_2<2n/p$, but for sufficiently large $\gamma_1$ and sufficiently small $\gamma_2$, $s_1>\gamma_2-n/p-1$ can be less than $-1$.  Thus the non-standard product estimate allows us to obtain existence results for initial data with lower regularity, but requires $\gamma_2$ to be small and requires the use of Besov spaces.

We also note that if we set $\gamma_2=0$ and $g_2(\xi)=1$ (and thus are back in the case of the generalized Navier-Stokes equation), then these techniques would recover the results of Wu in \cite{Wu} for the generalized Navier-Stokes equation.

As was stated above, these results will hold if the $g_i$ are Mihklin multipliers.  However, there are interesting choices of $g_i$ (specifically $g_i$ being, essentially, a logarithm) which are not Mihklin multipliers.  The following theorem addresses this case.

\begin{theorem}\label{log cor}Let $g_1$ and $g_2$ both satisfy 
\begin{equation}\label{cor part}\aligned g_i(r)&\leq Cr^{\varepsilon},
\\ |g^{(k)}_i(r)|&\leq Cr^{-k},
\endaligned
\end{equation}
for any $\varepsilon>0$ and for $1\leq k\leq n/2+1$.   Then Theorem's $\ref{old style short}$ and $\ref{special case 1}$ will hold, provided  $\gamma_1$ is replaced in the restrictions on the parameters by $\gamma_1-\varepsilon$ for arbitrarily small $\varepsilon>0$.
\end{theorem}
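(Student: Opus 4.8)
The plan is to trace through the proofs of Theorems~\ref{old style short} and \ref{special case 1} and identify precisely where the exponent $\gamma_1$ enters the estimates, then check that the condition $|g_i^{(k)}(r)| \le Cr^{-k}$ for $1 \le k \le n/2+1$ together with the polynomial growth bound $g_i(r) \le Cr^\varepsilon$ is exactly what is needed to run the operator estimates of Section~\ref{operator est} with $\gamma_1$ replaced by $\gamma_1 - \varepsilon$. The key observation is that the functions $g_i$ appearing in the symbols do not satisfy the Mikhlin condition (a logarithm has $g_i(r) \to \infty$, so $g_i$ is unbounded), but the \emph{logarithmic derivative} bound $|g_i^{(k)}(r)| \le Cr^{-k}$ is still available; and the growth $g_i(r) \le Cr^\varepsilon$ means that the symbol $|\xi|^{\gamma_1}/g_1(\xi)$ of $\mathcal{L}_1$ is bounded below (up to constants) by $|\xi|^{\gamma_1-\varepsilon}$ for $|\xi|$ large, which is what governs the decay of the semigroup $e^{t\mathcal{L}_1}$.

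First I would recall how the operator estimates in Section~\ref{operator est} are proved: one writes the kernel of $e^{t\mathcal{L}_1}$ (and of the composite operators $e^{t\mathcal{L}_1}(1-\alpha^2\mathcal{L}_2)^{-1}$, $\nabla e^{t\mathcal{L}_1}$, etc.\ that actually appear in the Duhamel formulation) as an oscillatory/decaying Fourier integral, introduces a Littlewood--Paley decomposition, and on each dyadic annulus $|\xi| \sim 2^j$ estimates the symbol and its derivatives up to order roughly $n/2+1$ (the number of derivatives needed to get $L^1$ control of the kernel via Bernstein/Plancherel-type arguments, i.e.\ $\lfloor n/2\rfloor + 1$). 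The technical conditions $(\ref{mal condition})$ and $(\ref{mal 2 2})$ are precisely the hypotheses that make these dyadic symbol estimates work and sum. So the proof of Theorem~\ref{log cor} reduces to two checks: (i) that $(\ref{cor part})$ implies $(\ref{mal condition})$ and $(\ref{mal 2 2})$ hold with $\gamma_1 \rightsquigarrow \gamma_1 - \varepsilon$, and (ii) that the parameter inequalities in Theorems~\ref{old style short} and \ref{special case 1} are used only through those operator estimates, so that weakening $\gamma_1$ to $\gamma_1 - \varepsilon$ everywhere in the hypotheses is both necessary and sufficient.

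For step (i): on the annulus $|\xi|\sim 2^j$ with $j \ge 0$, the symbol $m_1(\xi) = |\xi|^{\gamma_1}/g_1(\xi)$ satisfies $m_1(\xi) \gtrsim 2^{j\gamma_1}/g_1(2^j) \gtrsim 2^{j(\gamma_1-\varepsilon)}$ using $g_1(r)\le Cr^\varepsilon$; this is the lower bound that drives the time-decay factors $t^{-a}$ in the $C^T_{a;s,p,q}$ norms, with the effective dissipation exponent being $\gamma_1-\varepsilon$. For the derivative bounds, one differentiates $|\xi|^{\gamma_1}/g_1(\xi)$ using the Leibniz rule and the chain rule; each derivative falling on the $|\xi|^{\gamma_1}$ factor costs a power of $|\xi|^{-1}$, and each derivative falling on $1/g_1$ produces factors controlled by $|g_1^{(k)}/g_1| \lesssim r^{-k}$ (here one uses $g_1 \ge 1$ in the denominator), so altogether $|\partial^\beta m_1(\xi)| \lesssim |\xi|^{\gamma_1 - |\beta|} g_1(\xi)^{-1} \lesssim |\xi|^{\gamma_1 - \varepsilon - |\beta|}$ for $|\beta| \le n/2+1$ — again the Mikhlin-type estimate but with exponent $\gamma_1-\varepsilon$. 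The low-frequency regime $|\xi| \lesssim 1$ is handled separately and is unproblematic since there $g_1 \sim 1$. These are exactly the inputs that $(\ref{mal condition})$ and $(\ref{mal 2 2})$ abstract, so the operator estimates of Section~\ref{operator est} go through verbatim with $\gamma_1-\varepsilon$ in place of $\gamma_1$.

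The main obstacle, and the only place genuine care is required, is step (ii): verifying that no hypothesis in Theorems~\ref{old style short} and \ref{special case 1} implicitly uses $\gamma_1$ in a way \emph{other than} through the semigroup estimates — in particular, one must check the contraction-mapping argument (the fixed-point estimates on the bilinear Duhamel term and the continuity-in-time statements) and confirm that every appearance of $\gamma_1$ there is ultimately an appeal to the operator bounds, so that replacing $\gamma_1$ by $\gamma_1-\varepsilon$ in both the operator estimates and the stated parameter inequalities yields a consistent, self-contained argument. Since $\varepsilon$ can be taken as small as desired and the inequalities in the two theorems are open (strict, or else stable under small perturbation on the relevant side), for any admissible choice of parameters relative to $\gamma_1$ there is an $\varepsilon>0$ small enough that the same parameters remain admissible relative to $\gamma_1-\varepsilon$; thus the conclusion follows by applying Theorem~\ref{old style short} or \ref{special case 1} with its dissipation exponent taken to be $\gamma_1 - \varepsilon$. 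I expect the writeup to be short, essentially: state the reduction, verify the two symbol estimates $(\ref{mal condition})$ and $(\ref{mal 2 2})$ from $(\ref{cor part})$, and invoke the two theorems.
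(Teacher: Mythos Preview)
Your overall strategy is correct and matches the paper's: the point is that under the hypotheses~$(\ref{cor part})$ the semigroup $e^{t\mathcal{L}_1}$ satisfies the same smoothing estimates as in the Mikhlin case, but with the dissipation exponent $\gamma_1$ replaced by $\gamma_1-\varepsilon$, and then the contraction arguments in Sections~\ref{type 1} and~\ref{type 2} run verbatim with that substitution. The paper records this in one line, citing Proposition~\ref{annie} and Lemma~\ref{heat kernel bound log}.

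There is, however, a real error in your step~(i). You claim that $(\ref{cor part})$ implies $(\ref{mal condition})$ and $(\ref{mal 2 2})$ ``with $\gamma_1\rightsquigarrow\gamma_1-\varepsilon$.'' But $(\ref{mal 2 2})$ is the statement that the Fourier multiplier with symbol $g$ is $L^p$-bounded; this condition does not involve $\gamma_1$ at all, and it \emph{fails} outright for unbounded $g$ such as a logarithm (already on $L^2$, by Plancherel). Likewise $(\ref{mal condition})$ is a specific integral bound on the kernel, not the Littlewood--Paley symbol condition you describe, and there is no meaningful ``$\gamma_1-\varepsilon$ version'' of it to verify. So the log-type $g_i$ do not fit into the $(\ref{mal condition})$/$(\ref{mal 2 2})$ framework, and the reduction you propose does not go through.

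What the paper does instead is prove parallel operator estimates from scratch for the log case: Lemma~\ref{lpsemi g} (the $L^p\to L^q$ bound, using the Mikhlin theorem for $p=q$ and a modified kernel bound that absorbs the growth of $g$ into a $t^{-C\varepsilon}$ factor for $p<q$) and Lemma~\ref{sobolev log} (the Sobolev gain, observing that while the multiplier with symbol $g(|\xi|)$ is unbounded, the multiplier with symbol $|\xi|^{-\varepsilon}g(|\xi|)$ \emph{is} Mikhlin under $(\ref{cor part})$). These combine into Lemma~\ref{heat kernel bound log}, which is exactly the semigroup estimate with $\gamma_1-\varepsilon$. Your symbol computations (lower bound $m_1(\xi)\gtrsim|\xi|^{\gamma_1-\varepsilon}$ and derivative bounds) are correct and could be developed into an alternative Littlewood--Paley proof of the same lemma, but you should not present that as ``verifying $(\ref{mal condition})$ and $(\ref{mal 2 2})$.''
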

This follows directly from the results in Section $\ref{operator est}$, specifically Proposition $\ref{annie}$ and Lemma $\ref{heat kernel bound log}$.

Incorporating the additional constraints from the energy bound in \cite{kazuo}, we can now state the global existence result.
\begin{corollary} Let $p=2$ and let $n=3$.  Then, for any of our local existence results, if we additionally assume that  
\begin{equation*}\aligned &2\gamma_1+\gamma_2\geq 5,
\\  &\int_1^\infty \frac{ds}{s g_1^2(s)g_2(s)} =\infty,
\endaligned
\end{equation*}
then the local solutions can be extended to global solutions.
\end{corollary}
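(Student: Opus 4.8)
The plan is to combine the local solutions already produced in Theorems \ref{old style short}, \ref{special case 1}, and \ref{log cor} with the a priori energy bound established by Kazuo in \cite{kazuo}, using a standard continuation argument. First I would observe that when $p=2$ and $n=3$, the Besov space $B^{s_1}_{2,q}$ is comparable to (or continuously embedded in, for $q$ in the appropriate range) a Sobolev space $H^{s_1,2}(\mathbb{R}^3)$, and that the divergence-free initial datum $u_0$ is automatically in $L^2(\mathbb{R}^3)$ once $s_1 \geq 0$ (the cases with $s_1<0$ would require an extra remark, or the assumption that $u_0$ is in $L^2$ as well, which is the natural hypothesis here). The key analytic input from \cite{kazuo} is the energy inequality: multiplying the generalized Leray-$\alpha$ equation $(\ref{leray1})$ by $u$ and integrating, the nonlinear term drops out by the divergence-free condition, yielding a bound of the form
\begin{equation*}
\frac{d}{dt}\Big( \|u\|_{L^2}^2 + \alpha^2 \|\ltwo^{1/2} u\|_{L^2}^2 \Big) + 2\nu \|\lone^{1/2} u\|_{L^2}^2 + 2\nu\alpha^2 \|\lone^{1/2}\ltwo^{1/2} u\|_{L^2}^2 \leq 0,
\end{equation*}
so that the $L^2$-type energy is non-increasing and one has an integrated bound on $\int_0^t \|\lone^{1/2} u\|_{L^2}^2\,ds$. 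Under the conditions $2\gamma_1+\gamma_2 \geq 5$ and $\int_1^\infty \frac{ds}{s g_1^2(s) g_2(s)} = \infty$, Kazuo upgrades this to a global-in-time bound on a higher-regularity norm (the $H^{m,2}$ norm with $m > \max\{5/2, 1+2\gamma_1\}$); the logarithmic divergence condition is exactly what is needed to close the relevant Gronwall-type estimate on the enstrophy-like quantity.

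Next I would set up the continuation argument. Suppose $u$ is a local solution on a maximal time interval $[0,T^*)$ with $T^* < \infty$. The local theory (Theorems \ref{old style short}, \ref{special case 1}) gives that $T$ depends only on $\|u_0\|_{s_1,2,q}$ and is bounded below as long as that norm stays bounded; equivalently, if $T^* < \infty$ then $\limsup_{t\to T^{*-}} \|u(t)\|_{s_1,2,q} = \infty$ (a blow-up criterion obtained in the usual way by attempting to restart the solution from time $t$ close to $T^*$). So it suffices to show $\|u(t)\|_{s_1,2,q}$ — or some stronger norm controlling it — remains bounded on $[0,T^*)$. This is where Kazuo's global bound enters: since $B^{s_1}_{2,q} \hookrightarrow H^{s_1,2}$ and $s_1$ is, in all our cases, well below the Sobolev regularity $m$ controlled in \cite{kazuo}, the global $H^{m,2}$ bound (or even a lower-order version of it sufficient to dominate $\|u\|_{s_1,2,q}$) contradicts the blow-up of $\|u(t)\|_{s_1,2,q}$, forcing $T^* = \infty$. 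One subtlety is that Kazuo's bound presupposes $u_0 \in H^{m,2}$, whereas our $u_0$ only lies in $B^{s_1}_{2,q}$; the resolution is that by the smoothing property of $e^{t\lone}$ from Section \ref{operator est}, for any $t_0 \in (0,T^*)$ the solution $u(t_0)$ already lies in the higher-regularity space (indeed $u(t) \in B^{s_2}_{2,q}$ and iterating the local existence/smoothing one gains arbitrary regularity for positive time, using $\gamma_1 > 1$), so one applies Kazuo's a priori estimate on $[t_0, T^*)$ with data $u(t_0)$, and controls $[0,t_0]$ by the local theory directly.

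The main obstacle I anticipate is not any single hard estimate but the matching of function-space frameworks: Kazuo's result is stated for Sobolev-space (Hilbert) data in dimension three with the specific regularity threshold $m > \max\{5/2, 1+2\gamma_1\}$, and one must verify that (i) the local solutions of the present paper can be promoted, for positive time, into that regularity class — this relies essentially on the parabolic smoothing estimates for $e^{t\lone}$ proved in Section \ref{operator est} together with $\gamma_1 > 1$ — and (ii) that the a priori bound genuinely dominates the weaker Besov norm $\|\cdot\|_{s_1,2,q}$ appearing in the blow-up criterion, which follows from the embeddings $H^{m,2} \hookrightarrow B^{s_1}_{2,q}$ valid for $m$ large relative to $s_1$. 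A secondary point requiring care is the case $s_1 < -1$ permitted by Theorem \ref{special case 1}: there one cannot conclude $u_0 \in L^2$ from membership in $B^{s_1}_{2,q}$ alone, so the cleanest statement restricts to $u_0 \in B^{s_1}_{2,q} \cap L^2$ (automatic when $s_1 \geq 0$), and I would state the corollary with that understanding. Once these framework-compatibility issues are dispatched, the continuation argument itself is routine.
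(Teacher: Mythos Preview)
Your proposal is correct and follows essentially the same route as the paper: use the parabolic smoothing of $e^{t\mathcal{L}_1}$ (quantified in Section~\ref{Higher regularity for the local existence result}) to promote the local solution at any positive time into the high-regularity class required by Kazuo's result, apply the a~priori bound from \cite{kazuo} to get uniform-in-time control of the $B^{s_1}_{2,q}$ norm, and close with a standard continuation argument. Your identification of the framework-matching subtlety (low-regularity data vs.\ Kazuo's $H^{m,2}$ hypothesis, resolved by smoothing to a positive time) and your remark on the $L^2$ issue when $s_1<0$ are both apt; the paper handles the former exactly as you suggest and leaves the latter implicit.
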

Note that if $g_1$ and $g_2$ are Mihklin multipliers, then the additional constraint on the $g_i$ is satisfied.  Also note that if $g_1$ and $g_2$ satisfy $(\ref{cor part})$, then $g_1(s)g_2(s)\leq Cs^{\varepsilon}$ for any $\varepsilon>0$ (at least almost everywhere), which is similar to the requirement from Theorem \ref{log cor}.

The proof of the corollary relies on the smoothing effect of the operator $e^{t\mathcal{L}_1}$, which ensures that, for any $t>0$, our local solution $u(t,\cdot)\in B^r_{2,q}(\mathbb{R}^3)$ for any $r\in \mathbb{R}$.  This provides the smoothness necessary to use the energy bound from \cite{kazuo} to obtain a uniform-in-time bound on the $B^{s_1}_{2,q}(\mathbb{R}^3)$ norm of the solution, and then a standard bootstrapping argument completes the proof of global existence.  In Section $\ref{Higher regularity for the local existence result}$, we include an argument detailing this smoothing effect for the solution to Theorem $\ref{old style short}$.  

Finally, we remark that extending the local solutions to global solutions for $p\neq 2$ and $n>3$ will be the subject of future work.  Handling $n>3$ should follow by tweaking the argument used in \cite{kazuo}.  Obtaining global solutions for $p\neq 2$ is significantly more complicated, and the argument will follow the interpolation based argument used by Gallagher and Planchon in \cite{galplan} for the two dimensional Navier-Stokes equation.

\section{Besov spaces}\label{Besov space}

We begin by defining the Besov spaces $B^s_{p,q}(\mathbb{R}^n)$.  Let $\psi_0$ be an even, radial, Schwartz function with Fourier transform $\hat{\psi_0}$ that has the following properties:
\begin{equation*}\aligned &\hat{\psi_0}(x)\geq 0
\\  &\text{support~}\hat{\psi_0}\subset A_0:=\{\xi\in \mathbb{R}^n:2^{-1}<|\xi|<2\}
\\ &\sum_{j\in\mathbb{Z}} \hat{\psi_0}(2^{-j}\xi)=1, ~\text{for all}~ \xi\neq 0.
\endaligned
\end{equation*}

We then define 
$\hat{\psi_j}(\xi)=\hat{\psi}_0(2^{-j}\xi)$ (from Fourier inversion, this also means $\psi_j(x)=2^{jn}\psi_0(2^jx)$), and remark that $\hat{\psi_j}$ is supported in $A_j:=\{\xi\in\mathbb{R}^n:2^{j-1}<|\xi|<2^{j+1}\}$.  We also define $\Psi$ by 
\begin{equation}\label{low freq part}\hat{\Psi}(\xi)=1-\sum_{k=0}^\infty \hat{\psi}_k(\xi).
\end{equation}

We define the Littlewood Paley operators $\triangle_j$ and $S_j$ by
\begin{equation*}\triangle_j f=\psi_j\ast f, \quad
S_jf=\sum_{k=-\infty}^{j}\triangle_k f,
\end{equation*}
and record some properties of these operators.  Applying the Fourier Transform and
recalling that $\hat{\psi}_j$ is supported on $2^{j-1}\leq
|\xi|\leq2^{j+1}$, it follows that   
\begin{equation}\aligned \label{besovlemma1}\triangle_j\triangle_k f= 0, \quad |j-k|\geq 2
\\ \triangle_j (S_{k-3}f\triangle_{k}g)= 0 \quad |j-k|\geq 4,
\endaligned
\end{equation}
and, if $|i-k|\leq 2$, then 
\begin{equation}\label{besovpieces67}\triangle_j(\triangle_kf\triangle_i g)=0 \quad j>k+4.
\end{equation}

For $s\in\mathbb{R}$ and $1\leq p,q\leq \infty$ we define
the space $\tilde{B}^s_{p,q}(\mathbb{R}^n)$ to be the set of distributions such that 
\begin{equation*}\|u\|_{\tilde{B}^s_{p,q}}=\(\sum_{j=0}^\infty (2^{js}\|\triangle_j
u\|_{L^p})^q\)^{1/q}<\infty,
\end{equation*}
with the usual modification when $q=\infty$.  Finally, we define the Besov spaces $B^s_{p,q}(\mathbb{R}^n)$ by the norm 
\begin{equation*}\|f\|_{B^s_{p,q}}=\|\Psi*f\|_p+\|f\|_{\tilde{B}^s_{p,q}},
\end{equation*}
for $s>0$.  For $s>0$, we define $B^{-s}_{p',q'}$ to be the dual
of the space $B^s_{p,q}$, where $p',q'$ are the Holder-conjugates to
$p,q$.

These Littlewood-Paley operators are also used to define Bony's paraproduct.  We have 
\begin{equation}\label{lp start}fg=\sum_{k} S_{k-3}f\triangle_k g + \sum_{k}S_{k-3}g\triangle_k f+ \sum_{k}\triangle_k f\sum_{l=-2}^2 \triangle_{k+l} g.
\end{equation}

The estimates $(\ref{besovlemma1})$ and $(\ref{besovpieces67})$ imply that 
\begin{equation}\aligned \label{bony256}\triangle_j (fg)\leq &\sum_{k=-3}^3 \triangle_j (S_{j+k-3}f\triangle_{j+k} g)+ \sum_{k=-3}^3 \triangle_j (S_{j+k-3}g\triangle_{j+k} f)
\\ +&\sum_{k>j-4}\triangle_j \(\triangle_k f\sum_{l=-2}^2 \triangle_{k+l}g\).
\endaligned
\end{equation}

Now we turn our attention to establishing some basic Besov space estimates.  First, we let $1\leq q_1\leq q_2\leq \infty$, $\beta_1\leq \beta_2$, $1\leq p_1\leq p_2\leq\infty$, $\gamma_1=\gamma_2+n(1/p_1-1/p_2)$, and $r>s>0$.  Then we have the following:
\begin{equation}\label{besov embedding}\aligned \|f\|_{B^{\beta_1}_{p,q_2}}&\leq C\|f\|_{B^{\beta_2}_{p,q_1}},
\\ \|f\|_{B^{\gamma_2}_{p_2,q}}&\leq C\|f\|_{B^{\gamma_1}_{p_1,q}},
\\ \|f\|_{H^{s,p}}&\leq \|f\|_{B^r_{p,q}}, 
\\ \|f\|_{H^{s,2}}&=\|f\|_{B^s_{2,2}}\leq \|f\|_{B^{r}_{2,q}}.
\endaligned
\end{equation}
These will be referred to as the Besov embedding results.  

Next we record our two different Leibnitz-rule type estimate.  The first is the standard estimate, which can be found in (among many other places) Lemma $2.2$ in \cite{chae}.  See also  Proposition $1.1$ in \cite{TT}.
\begin{proposition}\label{product est 1}Let $s>0$ and $q\in [1,\infty]$.  Then 
\begin{equation*}\|fg\|_{B^s_{p,q}}\leq C\(\|f\|_{L^{p_1}}\|g\|_{B^s_{p_2,q}}+\|f\|_{B^s_{q_1,q}}\|g\|_{L^{q_2}}\),
\end{equation*}
where $1/p=1/p_1+1/p_2=1/q_1+1/q_2$.
\end{proposition}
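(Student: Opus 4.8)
The plan is to prove the product estimate $\|fg\|_{B^s_{p,q}} \le C(\|f\|_{L^{p_1}}\|g\|_{B^s_{p_2,q}} + \|f\|_{B^s_{q_1,q}}\|g\|_{L^{q_2}})$ via the Bony paraproduct decomposition recorded in $(\ref{bony256})$, estimating each of the three pieces separately. For the low-frequency part $\|\Psi * (fg)\|_p$ one writes $\Psi*(fg) = \Psi*(fg)$ and uses Young's inequality together with the fact that $\Psi \in L^1$ and Bernstein-type control; the main content is the homogeneous-type sum $\|fg\|_{\tilde B^s_{p,q}}$, so I would focus there and treat the low-frequency term as a routine consequence of $\|f\|_{L^{p_1}}, \|g\|_{L^{p_2}} \lesssim$ the stated norms.

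For the paraproduct piece $\sum_{k=-3}^3 \triangle_j(S_{j+k-3}f\,\triangle_{j+k}g)$, I would use $\|S_{j+k-3}f\|_{L^{p_1}} \le C\|f\|_{L^{p_1}}$ (boundedness of the partial sum operator on $L^{p_1}$, uniformly in $j$) and then bound $\|\triangle_j(S_{j+k-3}f\,\triangle_{j+k}g)\|_{L^p} \le \|S_{j+k-3}f\|_{L^{p_1}}\|\triangle_{j+k}g\|_{L^{p_2}}$ by Hölder with $1/p = 1/p_1 + 1/p_2$. Multiplying by $2^{js}$, relabelling $j+k \mapsto m$ so that $2^{js} \sim 2^{ms}$ for $|k| \le 3$, and taking the $\ell^q$ norm in $j$ gives $\|f\|_{L^{p_1}}\|g\|_{\tilde B^s_{p_2,q}}$ up to a constant from the finitely many shifts. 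The symmetric paraproduct $\sum_{k=-3}^3 \triangle_j(S_{j+k-3}g\,\triangle_{j+k}f)$ is handled identically with the roles of $f$ and $g$ interchanged, producing $\|g\|_{L^{q_2}}\|f\|_{\tilde B^s_{q_1,q}}$ with $1/p = 1/q_1 + 1/q_2$.

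The remainder (high-high) term $\sum_{k>j-4} \triangle_j(\triangle_k f \sum_{l=-2}^2 \triangle_{k+l}g)$ is where the hypothesis $s>0$ is essential. Here one uses $\|\triangle_j(\triangle_k f\,\triangle_{k+l}g)\|_{L^p} \le C\|\triangle_k f\|_{L^{p_1}}\|\triangle_{k+l}g\|_{L^{p_2}}$ and then, after multiplying by $2^{js}$, writes $2^{js} = 2^{(j-k)s}2^{ks}$ and sums over $k > j-4$: since $s>0$, the geometric factor $2^{(j-k)s}$ is summable in the regime $k \gtrsim j$, so a Young's inequality for the $\ell^q$ convolution (or a direct Schur-type estimate) yields $\|\{2^{js}\|\cdots\|_{L^p}\}_j\|_{\ell^q} \le C\|\{2^{ks}\|\triangle_k f\|_{L^{p_1}}\|\triangle_{k+l}g\|_{L^{p_2}}\}_k\|_{\ell^q}$, and then one distributes: keep the full $\tilde B^s$ norm on one factor (say $f$) and bound the other factor by $\sup_k \|\triangle_{k+l}g\|_{L^{p_2}} \lesssim \|g\|_{L^{p_2}}$ via Bernstein/Littlewood--Paley. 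Symmetrizing to also extract the $\|g\|_{\tilde B^s}\|f\|_{L^p}$ version and choosing the more convenient split gives the claimed bound.

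I expect the main obstacle to be the bookkeeping in the high-high interaction term: one must be careful that the exponent relations $1/p = 1/p_1 + 1/p_2 = 1/q_1 + 1/q_2$ are used consistently, that the summation $\sum_{k>j-4}2^{(j-k)s}$ genuinely converges (which fails at $s=0$, explaining the hypothesis), and that the final step of replacing one factor's $\tilde B^s$ norm by an $L^p$ norm is legitimate — this last replacement is where one actually chooses which of the two terms on the right-hand side to land in, and it relies on the trivial embedding of a single Littlewood--Paley block's $L^p$ norm being dominated by the full function's $L^p$ norm. Everything else is a standard application of Hölder's inequality, Young's inequality, and the uniform $L^p$-boundedness of $S_j$, so I would present those steps tersely.
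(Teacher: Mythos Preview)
The paper does not prove this proposition at all: it simply records the estimate as ``the standard estimate, which can be found in (among many other places) Lemma~2.2 in \cite{chae}'' and also cites Proposition~1.1 in \cite{TT}. So there is no proof in the paper to compare against.

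Your sketch is the standard Bony-paraproduct proof and is essentially correct. The two paraproduct pieces are handled exactly as you describe, via H\"older and the uniform $L^p$-boundedness of $S_j$. For the high--high remainder, your use of $s>0$ to sum the geometric factor $2^{(j-k)s}$ over $k>j-4$ is the key point, and your final step --- putting the $\tilde B^s$ weight on one factor and bounding the remaining Littlewood--Paley block of the other factor by its $L^p$ norm --- is legitimate. One small remark: in the high--high term you should be explicit that you apply H\"older with the $(q_1,q_2)$ split (not the $(p_1,p_2)$ split) so that the output lands in the $\|f\|_{B^s_{q_1,q}}\|g\|_{L^{q_2}}$ term; as written you drift between the two splits. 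Otherwise the argument is fine and is precisely what the cited references do.
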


Our second product estimate is less common.  The estimate originated in \cite{chemin}; another proof can be found in \cite{besovpaper2}.
\begin{proposition}\label{product est 2} Let $f\in B^{s_1}_{p_1,q}(\mathbb{R}^n)$ and let $g\in B^{s_2}_{p_2,q}(\mathbb{R}^n)$.  Then, for any $p$ such that $1/p\leq 1/p_1+1/p_2$ and with $s=s_1+s_2-n(1/p_1+1/p_2-1/p)$,  we have 
\begin{equation*}\|fg\|_{B^s_{p,q}}\leq \|f\|_{B^{s_1}_{p_1,q}}\|g\|_{B^{s_2}_{p_2,q}},
\end{equation*}
provided $s_1<n/p_1$, $s_2<n/p_2$, and $s_1+s_2>0$.
\end{proposition}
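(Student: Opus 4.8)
The plan is to decompose $fg$ via Bony's paraproduct \eqref{lp start}, estimate each of the three pieces separately, and sum the resulting dyadic bounds. I would write $fg = \pi(f,g) + \pi(g,f) + \rho(f,g)$, where $\pi(f,g)=\sum_k S_{k-3}f\,\triangle_k g$ is the low--high paraproduct and $\rho(f,g)=\sum_k \triangle_k f\sum_{l=-2}^2\triangle_{k+l}g$ is the ``resonant'' remainder. For the first paraproduct $\pi(f,g)$: by \eqref{besovlemma1}, $\triangle_j\pi(f,g)$ only sees frequencies $k$ with $|j-k|\le 3$, so $\|\triangle_j\pi(f,g)\|_{L^p}\lesssim \sum_{|k-j|\le 3}\|S_{k-3}f\|_{L^{r}}\|\triangle_k g\|_{L^{p_2}}$ for a suitable $r$ with $1/p = 1/r + 1/p_2$. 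Using Bernstein's inequality to pass from $L^{p_1}$ to $L^{r}$, one gets $\|S_{k-3}f\|_{L^r}\lesssim 2^{k\,n(1/p_1-1/r)}\|S_{k-3}f\|_{L^{p_1}}$, and since $s_1 < n/p_1$ the low-frequency truncation $\|S_{k-3}f\|_{L^{p_1}}\lesssim 2^{-k s_1}\|f\|_{B^{s_1}_{p_1,q}}$ (summing the geometric series in the regime $s_1>0$; the case $s_1\le 0$ needs the $\Psi$ piece handled separately, and here one uses $s_1+s_2>0$ to control it). Multiplying the dyadic weights, the exponent of $2^k$ works out to exactly $-s$ after accounting for the $2^{js}$ weight, giving an $\ell^q$-summable bound in $k$, hence $\|\pi(f,g)\|_{B^s_{p,q}}\lesssim \|f\|_{B^{s_1}_{p_1,q}}\|g\|_{B^{s_2}_{p_2,q}}$. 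The symmetric piece $\pi(g,f)$ is handled identically using $s_2<n/p_2$.

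The resonant term $\rho(f,g)$ is where the condition $s_1+s_2>0$ does the real work. Here \eqref{besovpieces67} only gives $\triangle_j\rho(f,g)=\sum_{k\ge j-4}\triangle_j(\triangle_k f\sum_{l}\triangle_{k+l}g)$, i.e. the sum runs over \emph{high} frequencies $k\ge j-4$, so there is no support constraint forcing $k\approx j$ and we cannot sum by localization alone. Instead I would estimate $\|\triangle_j(\triangle_k f\,\triangle_{k+l}g)\|_{L^p}$ using Young/Bernstein: first $\|\triangle_k f\,\triangle_{k+l}g\|_{L^\sigma}\le \|\triangle_k f\|_{L^{p_1}}\|\triangle_{k+l}g\|_{L^{p_2}}$ where $1/\sigma = 1/p_1+1/p_2\ge 1/p$, then apply $\triangle_j$ and Bernstein to move from $L^\sigma$ to $L^p$ at cost $2^{jn(1/\sigma-1/p)}$. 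Writing $\|\triangle_k f\|_{L^{p_1}}=2^{-ks_1}c_k\|f\|_{B^{s_1}_{p_1,q}}$ and $\|\triangle_{k+l}g\|_{L^{p_2}}=2^{-(k+l)s_2}d_{k+l}\|g\|_{B^{s_2}_{p_2,q}}$ with $(c_k),(d_k)\in\ell^q$, the $k$-sum becomes $\sum_{k\ge j-4}2^{-k(s_1+s_2)}c_k d_{k+l}$; because $s_1+s_2>0$ this geometric tail is dominated by its first term $\sim 2^{-j(s_1+s_2)}$ times a convolution of $\ell^q$ sequences (Young for sequences). Collecting the $2^{js}$ weight and the Bernstein factor $2^{jn(1/\sigma-1/p)}$, the net power of $2^j$ is $s - (s_1+s_2) + n(1/p_1+1/p_2-1/p) = 0$ by the hypothesis on $s$, leaving an $\ell^q$-summable sequence in $j$. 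This yields the bound for $\rho(f,g)$ and completes the estimate.

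I expect the main obstacle to be the bookkeeping at the endpoints and the low-frequency ($\Psi$) contributions: the clean geometric summations above implicitly assume we are in the regime where $s_1>0$ (or $s_2>0$) so that $\|S_{k-3}f\|_{L^{p_1}}$ is controlled by the smallest dyadic block, whereas Chemin's estimate allows $s_1$ or $s_2$ to be negative provided $s_1+s_2>0$ and $s_i<n/p_i$. Handling a negative $s_1$ requires re-summing the low-high paraproduct $\pi(f,g)$ with the roles adjusted — one still has $\|S_{k-3}f\|_{L^r}\lesssim 2^{k n(1/p_1-1/r)}\sum_{m\le k-3}2^{-ms_1}c_m$, and when $s_1<0$ this sum is instead governed by its top term $2^{-(k-3)s_1}$, which changes the exponent but, crucially, is still consistent with the final target exponent because the resonant term's contribution shifts correspondingly; verifying that all three pieces land on the \emph{same} exponent $s$ regardless of the signs of $s_1,s_2$ (as long as the stated inequalities hold) is the delicate part. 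The definition of $B^s_{p,q}$ used here (with the separate $\|\Psi*f\|_p$ term and the sum only over $j\ge 0$) also means one must separately check that $\Psi*(fg)\in L^p$, which follows from $\|\Psi*(fg)\|_p\lesssim \|fg\|_{L^\sigma}\lesssim \|f\|_{L^{p_1}}\|g\|_{L^{p_2}}$ together with the embeddings \eqref{besov embedding} and the hypotheses $s_i<n/p_i$, $s_1+s_2>0$, which guarantee $f\in L^{p_1}$, $g\in L^{p_2}$ are not literally available but the corresponding low-frequency pieces are — this is the one spot where I would be most careful.
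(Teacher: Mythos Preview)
The paper does not actually prove this proposition: it is simply quoted from the literature, with the remark that ``the estimate originated in \cite{chemin}; another proof can be found in \cite{besovpaper2}.'' So there is no in-paper argument to compare your proposal against.

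That said, your proposal is the standard paraproduct proof and is correct in outline: decompose $fg$ via \eqref{lp start}, use the almost-orthogonality \eqref{besovlemma1} to localize the two paraproducts $\pi(f,g)$, $\pi(g,f)$ to $|j-k|\le 3$, use Bernstein together with $s_i<n/p_i$ to control the low-frequency factors, and use \eqref{besovpieces67} together with $s_1+s_2>0$ to sum the tail in the resonant remainder. The exponent bookkeeping you indicate is right: the Bernstein cost $2^{jn(1/p_1+1/p_2-1/p)}$ in the remainder exactly cancels against $2^{-j(s_1+s_2)}$ to leave the weight $2^{js}$. Your self-diagnosed weak spot is also the genuine one: when one of the $s_i$ is nonpositive the bound $\|S_{k-3}f\|_{L^{p_1}}\lesssim 2^{-ks_1}\|f\|_{B^{s_1}_{p_1,q}}$ fails as written, and one must instead apply Bernstein block-by-block and sum the resulting geometric series using $s_1<n/p_1$ (which is what actually makes the low-frequency sum converge, regardless of the sign of $s_1$). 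Once that is done carefully, and the $\Psi$-piece is handled by the same Bernstein/H\"older combination, the argument goes through. This is precisely the route taken in Chemin's original proof, so your proposal is aligned with the cited source even though the present paper omits the details.
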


\section{Local Existence by Proposition $\ref{product est 1}$}\label{type 1}
Our goal in this section is to prove the following Theorem.
\begin{theorem}\label{full version 2}Let $\gamma_1>1$, $\gamma_2>0$, and assume $g_1$ and $g_2$ satisfy equations $(\ref{mal condition})$ and $(\ref{mal 2 2})$.  Let $u_0\in B^{s_1}_{p,q}(\mathbb{R}^n)$ be divergence-free.  Then there exists a local solution $u$ to the generalized Leray-alpha equation $(\ref{leray1})$, with
\begin{equation*}u\in BC([0,T):B^{s_1}_{p,q}(\mathbb{R}^n))\cap \dot{C}^T_{a;s_2,p,q},
\end{equation*}
where $a=(s_2-s_1)/\gamma_1$ if there exists $k>0$ such that the parameters satisfy $(\ref{full 2 list})$.  $T$ can be chosen to be a non-increasing function of $\|u_0\|_{s_1, p, q}$ with $T=\infty$ if $\|u_0\|_{s_1, p, q}$ is sufficiently small.
\end{theorem}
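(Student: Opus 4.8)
The plan is to recast the generalized Leray-$\alpha$ equation as a fixed-point problem and apply the contraction mapping principle in the space $X = BC([0,T):B^{s_1}_{p,q})\cap \dot C^T_{a;s_2,p,q}$, following the Kato–Ponce scheme. First I would apply the Leray projection $\mathbb{P}$ onto divergence-free vector fields to eliminate the pressure and solve the resulting linear part by Duhamel's formula. Writing $v=(1-\alpha^2\mathcal{L}_2)u$, the equation becomes $\partial_t v + \mathbb{P}\,\nabla_u v - \nu\mathcal{L}_1 v = 0$; since $(1-\alpha^2\mathcal{L}_2)$ is a (time-independent) invertible Fourier multiplier that commutes with $\mathcal{L}_1$ and with $e^{t\mathcal{L}_1}$, I can equivalently work with $u$ directly and write
\begin{equation*}
u(t) = e^{t\mathcal{L}_1}u_0 - \int_0^t e^{(t-\tau)\mathcal{L}_1}(1-\alpha^2\mathcal{L}_2)^{-1}\mathbb{P}\,\mathrm{div}\,(u\otimes u)(\tau)\,d\tau =: \Phi(u).
\end{equation*}
Here I have used $\mathrm{div}(u\otimes u) = (u\cdot\nabla)u$ since $\mathrm{div}\,u=0$, which moves one derivative onto the semigroup.

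The core of the argument is then a pair of estimates. For the linear term, the smoothing estimates for $e^{t\mathcal{L}_1}$ proved in Section~\ref{operator est} give $\|e^{t\mathcal{L}_1}u_0\|_{s_2,p,q}\lesssim t^{-(s_2-s_1)/\gamma_1}\|u_0\|_{s_1,p,q}$, i.e. $e^{t\mathcal{L}_1}u_0\in \dot C^T_{a;s_2,p,q}$ with $a=(s_2-s_1)/\gamma_1$ as claimed (the $\dot{}$ — the vanishing of $t^a u(t)$ as $t\to 0^+$ — follows by density of Schwartz functions), and also $e^{t\mathcal{L}_1}u_0\in BC([0,T):B^{s_1}_{p,q})$ by strong continuity of the semigroup. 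For the bilinear term $B(u,v)$ I estimate the $B^{s_2}_{p,q}$ and $B^{s_1}_{p,q}$ norms of the Duhamel integral: using the operator bound on $e^{(t-\tau)\mathcal{L}_1}$ to absorb the $\mathrm{div}$ plus the jump from $B^{s_1}$ (or $B^{s_2}$) regularity up to the needed level — this costs a factor $(t-\tau)^{-\sigma/\gamma_1}$ for an appropriate $\sigma$ — then the boundedness of $(1-\alpha^2\mathcal{L}_2)^{-1}$ as a map gaining $\gamma_2$ derivatives (this is where $\gamma_2$ and the condition $s_2>\gamma_2$ or $s_2\ge\gamma_2/2$ enter), then Proposition~\ref{product est 1} to estimate $\|u\otimes v\|$ by $\|u\|_{L^\infty}\|v\|_{s,p,q}$-type products, and finally Besov embedding $B^{s_2}_{p,q}\hookrightarrow L^\infty$ (using $\gamma_1\ge s_2-s_1+1+n/p$, which together with $s_2-s_1 = a\gamma_1$ forces $s_2 \ge n/p$, enough for the embedding into $L^\infty$). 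The resulting time integral $\int_0^t (t-\tau)^{-\sigma/\gamma_1}\tau^{-2a}\,d\tau$ converges and produces a positive power of $t$ precisely under the stated inequalities $0<s_2-s_1<\min\{\gamma_1/2,1\}$ — the $\gamma_1/2$ bound keeps $2a<1$ so the integral near $\tau=0$ converges, and the upper bound $1$ (together with $\gamma_1>1$) keeps $\sigma/\gamma_1<1$ so the integral near $\tau=t$ converges. This gives $\|B(u,v)\|_X \le C\,T^{\delta}\,\|u\|_X\|v\|_X$ for some $\delta>0$ (or $\delta=0$ with small constant in the critical case).

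With these two estimates in hand, a standard argument finishes the proof: on the ball of radius $2\|e^{t\mathcal{L}_1}u_0\|_X$ in $X$, the map $\Phi$ is a contraction provided $C T^\delta \|e^{t\mathcal{L}_1}u_0\|_X < 1/4$, which holds for $T$ small depending only on $\|u_0\|_{s_1,p,q}$ (monotonically), and for all $T$, indeed $T=\infty$, if $\|u_0\|_{s_1,p,q}$ is small enough to beat the constant outright. The fixed point $u$ is the desired solution; that $u\in BC([0,T):B^{s_1}_{p,q})$ (not merely $C$) and $u\in\dot C^T_{a;s_2,p,q}$ is built into membership in $X$. Uniqueness in $X$ follows from the bilinear estimate in the usual way. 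The main obstacle is not the functional-analytic scaffolding — which is routine once $X$ is chosen — but rather verifying that the semigroup $e^{t\mathcal{L}_1}$ genuinely satisfies the Kato–Ponce-type operator bounds $\|\triangle_j e^{t\mathcal{L}_1}f\|_{L^p}\lesssim e^{-ct 2^{j\gamma_1}/g_1(2^j)}\|\triangle_j f\|_{L^p}$ with enough uniformity to sum the Littlewood–Paley pieces against $2^{j(s_2-s_1)}$; this is exactly why Section~\ref{operator est} and the technical hypotheses~(\ref{mal condition}) and (\ref{mal 2 2}) on $g_1,g_2$ are needed, and the delicacy is entirely in controlling the symbol $e^{-t|\xi|^{\gamma_1}/g_1(\xi)}$ and its derivatives when $g_1$ is only assumed to be a Mikhlin-type multiplier bounded below by $1$ rather than a constant.
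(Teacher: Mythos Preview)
Your overall scaffolding (Duhamel formulation, contraction in $BC([0,T):B^{s_1}_{p,q})\cap\dot C^T_{a;s_2,p,q}$, semigroup smoothing from Section~\ref{operator est}) matches the paper, but the integral equation you wrote is wrong. In the generalized Leray-$\alpha$ model the transport term is $\nabla_u(1-\alpha^2\mathcal{L}_2)u=(u\cdot\nabla)(1-\alpha^2\mathcal{L}_2)u$, so after inverting $(1-\alpha^2\mathcal{L}_2)$ the fixed-point map is
\[
\Phi(u)=e^{t\mathcal{L}_1}u_0-\int_0^t e^{(t-\tau)\mathcal{L}_1}(1-\alpha^2\mathcal{L}_2)^{-1}\mathbb{P}\,\mathrm{div}\bigl(u\otimes(1-\alpha^2\mathcal{L}_2)u\bigr)(\tau)\,d\tau,
\]
not with $u\otimes u$ inside the divergence. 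This is not cosmetic: the $\gamma_2$ derivatives you gain from $(1-\alpha^2\mathcal{L}_2)^{-1}$ are exactly cancelled by the $\gamma_2$ derivatives lost on the second factor $(1-\alpha^2\mathcal{L}_2)u$, so the nonlinearity is not $\gamma_2$-smoother than Navier--Stokes as your formula would suggest. In the paper the product estimate (Proposition~\ref{product est 1}) is applied to $u\otimes(1+\mathcal{L}_2)u$ at regularity level $r+1-\gamma_2$ with $r+1=s_2$; the constraint $s_2>\gamma_2$ in~(\ref{full 2 list}) arises precisely because Proposition~\ref{product est 1} needs $r+1-\gamma_2>0$, and this would be invisible from your formulation.

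There is a second gap in how you close the product. You propose to use $B^{s_2}_{p,q}\hookrightarrow L^\infty$, asserting that the hypotheses force $s_2\geq n/p$; they do not (take $\gamma_1$ large and $s_1,s_2$ small). The paper instead estimates $W(u,u)$ in $B^{r}_{p^*,q}$ with $p^*<p$, applies Proposition~\ref{product est 1} with $1/p^*=1/p_1+1/p$, and then uses the embedding $B^{s_2}_{p,q}\hookrightarrow L^{p_1}$ with $p_1=np/(n-kp)$. The free parameter $k$, constrained by $0<k\le\gamma_2$ and $kp<n$, is exactly the $k$ that appears in the statement of the theorem via the list~(\ref{full 2 list}); the semigroup bound from Proposition~\ref{heat kernel bound} then absorbs both the regularity jump and the integrability shift $n/p^*-n/p$, which is how the final inequality $\gamma_1\geq s_2-s_1+1+n/p-k$ emerges. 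Your sketch, working at fixed integrability $p$ and appealing to $L^\infty$, never introduces $k$ and therefore cannot reproduce the conditions the theorem actually asserts.
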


We begin by re-writing equation $(\ref{leray1})$ as 
\begin{equation}\aligned \label{leray2}&\partial_t u+P(1-\alpha^2\ltwo)^{-1}\div(u\tensor (1-\alpha^2\ltwo)u)-\nu\lone u=0,
\\ &u(0,x)=u_0(x), ~~ \div u_0=\div u=0,
\endaligned
\end{equation}
where $P$ is the Hodge projection onto divergence free vector fields and an application of the divergence free condition shows $\nabla_u (1-\alpha^2  \ltwo) u=\div(u\tensor (1-\alpha^2\ltwo)u)$, where $v\tensor w$ is the matrix with $ij$ entry equal to the product of the $i^{th}$ coordinate of $v$ and the $j^{th}$ coordinate of $w$.

Setting $\alpha=1$ for notational simplicity and applying Duhamel's principle, we get that $u$ is a solution to the equation if and only if $u$ is a fixed point of the map $\Phi$ given by 

\begin{equation*}\Phi(u)=e^{t\mathcal{L}_1}u_0+\int_0^t e^{(t-s)\mathcal{L}_1}(W(u(s),u(s)))ds,
\end{equation*}
where $W(u,v)=(1-\mathcal{L}_2)^{-1}\div(u(s)\tensor (1+\mathcal{L}_2)v(s))$.  Our goal is to show that $\Phi$ is a contraction in the space 
\begin{equation*}\aligned X_{T,M}=\{f\in BC([0,T):B^{s_1}_{2,q}(\mathbb{R}^n))\cap \dot{C}_{a;s_2,p,q} \text{~and~}
\\ \sup_t \|f(t)-e^{t\mathcal{L}_1u_0}\|_{B^{s_1}_{p,q}}+\sup_t t^{a}\|u(t)\|_{B^{s_2}_{p,q}}<M\},
\endaligned
\end{equation*}
where $a=(s_2-s_1)/\gamma_1$, for strictly positive $T$ and $M$ to be chosen later.

Following the arguments outlined in \cite{KP} and \cite{sobpaper}, $\Phi$ will be a contraction if we can show that 
\begin{equation}\label{main parts 1}\aligned &\sup_{t}t^{a}\|e^{t\mathcal{L}_1}u_0\|_{B^{s_2}_{2,q}}<M/3,
\\ &\sup_{t}\|\int_0^t e^{(t-s)\mathcal{L}_1} W(u(s),u(s))ds\|_{B^{s_1}_{2,q}}<M/3,
\\ &\sup_{t}t^a\|\int_0^t e^{(t-s)\mathcal{L}_1} W(u(s),u(s))ds\|_{B^{s_2}_{2,q}}<M/3,
\endaligned
\end{equation}
for $u\in X_{T,M}$.  Similar to Proposition $3$ in \cite{sobpaper}, Proposition $\ref{heat kernel bound}$ and the definition of $a$ give 
\begin{equation*}\sup_{t}t^{a}\|e^{t\mathcal{L}_1}u_0\|_{B^{s_2}_{2,q}}<M/3
\end{equation*}
holds a small enough choice of $T>0$.  Turning to the second inequality, applying Minkowski's inequality and Proposition $\ref{heat kernel bound}$, we get  
\begin{equation}\label{23-1}\aligned &\sup_{t}\|\int_0^t e^{(t-s)\Li} W(u(s),u(s))ds\|_{B^{s_1}_{p,q}}
\\ \leq &\sup_{t}\int_0^t \|e^{(t-s)\Li} W(u(s),u(s))\|_{B^{s_1}_{p,q}}ds
\\ \leq &\sup_{t}\int_0^t |t-s|^{-(s_1-r+n/p^*-n/p)/\gamma_1} \|W(u(s),u(s))\|_{B^{r}_{p^*,q}}ds,
\endaligned
\end{equation}
where $p^*\leq p$ will be specified later.  Using Proposition $\ref{product est 1}$, we have 
\begin{equation}\label{2mal}\aligned &\|W(u(s),u(s))\|_{B^{r}_{p^*,q}}\leq \|u\tensor (1+\mathcal{L}_2)u\|_{B^{r+1-\gamma_2}_{p^*,q}}
\\ \leq &\|u\|_{L^{p_1}}\|(1+\mathcal{L}_2)u\|_{B^{r+1-\gamma_2}_{p_2,q}}+\|u\|_{B^{r+1-\gamma_2}_{q_1,1}}\|(1+\mathcal{L}_2)u\|_{L^{q_2}}
\\ \leq&\|u\|_{L^{p_1}}\|u\|_{B^{r+1}_{p_2,q}}+\|u\|_{B^{r+1-\gamma_2}_{q_1,1}}\|(1+\mathcal{L}_2)u\|_{L^{q_2}},
\endaligned 
\end{equation}
where $1/p^*=1/p_1+1/p_2=1/q_1+1/q_2$, provided $r+1-\gamma_2>0$ (note that if $\gamma_2=0$, the choice of $r=-1$ reduces this to Holder's inequality).  In order to complete the argument, we need to bound this by $\|u\|_{B^{s_2}_{p,q}}^2$.  To facilitate this, we choose $r+1=s_2$ (which forces $s_2>\gamma_2$) and  $q_2=p_2=p$ (which forces $p_1=q_1$) and then equation $(\ref{2mal})$ becomes  
\begin{equation}\label{2mali}\aligned \|W(u(s),u(s))\|_{B^{r}_{p^*,q}}\leq& \|u\|_{L^{p_1}}\|u\|_{B^{r+1}_{p_2,q}}+\|u\|_{B^{r+1-\gamma_2}_{q_1,1}}\|(1+\mathcal{L}_2)u\|_{L^{q_2}}
\\ \leq&\|u\|_{B^{s_2}_{p,q}}\(\|u\|_{L^{p_1}}+\|u\|_{B^{r+1-\gamma_2}_{p_1,1}}\),
\endaligned 
\end{equation}
where we used Proposition $\ref{annie}$ for the second inequality.
Finally, choosing $p_1=np/(n-kp)$ for some $k<\gamma_2$, we use a Sobolev embedding estimate (see Proposition $6.4$ in \cite{T3}) to get 
\begin{equation}\|W(u(s),u(s))\|_{B^{r}_{p^*,q}}\leq \|u\|_{B^{s_2}_{p,q}}^2,
\end{equation}
provided 
\begin{equation}\label{product 1 term}\aligned 1/p^*-1/p&=1/p_1=(n-k p)/np,
\\ s_2&>\gamma_2,
\\ s_2&=r+1,
\\ kp&<n.
\endaligned
\end{equation}

Returning to the estimate begun in $(\ref{23-1})$, we have 
\begin{equation*}\aligned  &\sup_{t}\|\int_0^t e^{(t-s)\Li} W(u(s),u(s))ds\|_{B^{s_1}_{p,q}}
\\ \leq &\sup_t \int_0^t |t-s|^{-(s_1-r+n/p^*-n/p)/\gamma_1}s^{-2a}s^{2a}\|u(s)\|^2_{B^{s_2}_{p,q}}ds
\\ \leq &C\sup_t \|u\|^2_{a;s_2,p,q}t^{-(s_1-r+n/p^*-n/p)/\gamma_1-2(s_2-s_1)/\gamma_1+1}<M/3,
\endaligned
\end{equation*}
provided 
\begin{equation}\label{2ty1}\aligned 0&\leq (s_1-r+n/p^*-n/p)/\gamma_1<1
\\ 1&>2(s_2-s_1)/\gamma_1
\\0&\leq -(s_1-r+n/p^*-n/p)/\gamma_1-2(s_2-s_1)/\gamma_1+1.
\endaligned
\end{equation}

For the last term in $(\ref{2main parts 1})$, we have 
\begin{equation*}\aligned &\sup_{t}t^{a}\|\int_0^t e^{(t-s)\Li} W(u(s),u(s))ds\|_{B^{s_2}_{p,q}}
\\ \leq &\sup_{t}t^{a}\int_0^t |t-s|^{-(s_2-r+n/p^*-n/p)/\gamma_1} \|W(u(s),u(s))\|_{B^{r}_{2,q}}ds
\\ \leq &\sup_{t}t^{a}\int_0^t |t-s|^{-(s_2-r+n/p^*-n/p)/\gamma_1}s^{-2a}s^{2a}\|u(s)\|^2_{B^{s_2}_{p,q}}ds
\\ \leq &C\|u\|_{B^{s_2}_{p,q}}^2\sup_t t^{a}t^{-(s_2-r+n/p^*-n/p)/\gamma_1-2(s_2-s_1)/\gamma_1+1}
\\ \leq &CM^2t^{-(s_2-r)/\gamma_1-(s_2-s_1)/\gamma_1+1}<M/3,
\endaligned
\end{equation*}
provided 
\begin{equation}\label{2ty2}\aligned 0&\leq (s_2-r+n/p^*-n/p)/\gamma_1<1,
\\ 1&>2(s_2-s_1)/\gamma_1,
\\ 0&\leq -(s_2-r+n/p^*-n/p)/\gamma_1-(s_2-s_1)/\gamma_1+1.
\endaligned
\end{equation}

Combining $(\ref{2ty1})$, and $(\ref{2ty2})$ (and removing redundancies) gives 
\begin{equation*}\aligned s_1&>r
\\ \gamma_1/2&>s_2-s_1,
\\0&\leq s_2-r+n/p^*-n/p<\gamma_1,
\\ \gamma_1&\geq 2s_2-r+n/p^*-n/p-s_1.
\endaligned
\end{equation*}
Incorporating $(\ref{product 1 term})$, and observing that the last inequality implies the third in the preceding list of inequalities, we get 
\begin{equation}\label{full 2 list}\aligned s_2&>\gamma_2\geq k,
\\ kp&<n,
\\ s_2-s_1&<\min\{\gamma_1/2, 1\},
\\ \gamma_1&\geq s_2-s_1+1+n/p-k.
\endaligned
\end{equation}
This completes Theorem $\ref{full version 2}$.  Note that for $\gamma_1=2$ and $\gamma_2=0$, this recovers, up to the slight modification mentioned after equation $(\ref{2mal})$, the result from \cite{kp}  for the Navier-Stokes equation.  In general, the distinction between this result and the one in the next section is that $s_1$ must be larger, but there is no bound on the size of $\gamma_2$.  To get Theorem $\ref{old style short}$, choose $k$ to be an arbitrarily small positive number, and the last inequality becomes $\gamma_1\geq s_2-s_1+1+n/p$.

\section{Local Existence using Proposition $\ref{product est 2}$}\label{type 2}
In this section we prove the following local existence result.
\begin{theorem}\label{full generality}Let $\gamma_1>1$, $\gamma_2>0$, and assume $g_1$ and $g_2$ satisfy equations $(\ref{mal condition})$ and $(\ref{mal 2 2})$.  Let $u_0\in B^{s_1}_{p,q}(\mathbb{R}^n)$ be divergence-free.  Then there exists a local solution $u$ to the generalized Leray-alpha equation $(\ref{leray1})$, with
\begin{equation*}u\in BC([0,T):B^{s_1}_{p,q}(\mathbb{R}^n))\cap \dot{C}^T_{a;s_2,p,q},
\end{equation*}
where $a=(s_2-s_1)/\gamma_1$, if there exists $r$, $r_1$ and $r_2$ such that all the parameters satisfy $(\ref{final list 2})$.  $T$ can be chosen to be a non-increasing function of $\|u_0\|_{s_1, p, q}$ with $T=\infty$ if $\|u_0\|_{s_1, p, q}$ is sufficiently small.
\end{theorem}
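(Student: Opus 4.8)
The plan is to follow the same contraction-mapping scheme used for Theorem \ref{full version 2}, replacing the standard Leibnitz estimate (Proposition \ref{product est 1}) by the Chemin-type product estimate (Proposition \ref{product est 2}) when we bound the nonlinear term. As before, we reformulate \eqref{leray1} as the integral equation
\begin{equation*}\Phi(u)=e^{t\mathcal{L}_1}u_0+\int_0^t e^{(t-s)\mathcal{L}_1}W(u(s),u(s))\,ds,\qquad W(u,v)=(1-\mathcal{L}_2)^{-1}\div(u\tensor(1+\mathcal{L}_2)v),\end{equation*}
and seek a fixed point in a ball $X_{T,M}$ inside $BC([0,T):B^{s_1}_{p,q})\cap\dot{C}^T_{a;s_2,p,q}$ with $a=(s_2-s_1)/\gamma_1$. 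Just as in \eqref{main parts 1}, it suffices to control the three quantities: the linear term $\sup_t t^a\|e^{t\mathcal{L}_1}u_0\|_{B^{s_2}_{p,q}}$, which is handled verbatim by the smoothing bound (Proposition \ref{heat kernel bound}) and the $\dot C$-membership of $e^{t\mathcal{L}_1}u_0$; and the two Duhamel terms measured in $B^{s_1}_{p,q}$ and in $B^{s_2}_{p,q}$ with weight $t^a$. For those two, the structure is identical to Section \ref{type 1}: insert Proposition \ref{heat kernel bound} to pull $|t-s|^{-\sigma/\gamma_1}$ out with $\sigma$ a difference of regularities (plus an $n/p$-shift if we raise integrability via an intermediate exponent $p^*$), bound $\|W(u(s),u(s))\|_{B^{r}_{p^*,q}}$ by $Cs^{-2a}\|u\|_{a;s_2,p,q}^2$, and then the $s$-integral converges and is a positive power of $t$ precisely under a finite list of inequalities on $(\sigma,\gamma_1,a)$.

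The one genuinely different step is the nonlinear estimate. Writing $W(u,u)=(1-\mathcal{L}_2)^{-1}\div(u\tensor(1+\mathcal{L}_2)u)$, we lose $\gamma_2-1$ derivatives from $(1-\mathcal{L}_2)^{-1}\div$ and gain $\gamma_2$ from the $\mathcal{L}_2$ factor, so it is enough to estimate $\|u\tensor(1+\mathcal{L}_2)u\|_{B^{r+1-\gamma_2}_{p^*,q}}$ where $B^{r}_{p^*,q}$ is the space in which we want $W$. I apply Proposition \ref{product est 2} with the two factors $u\in B^{s_2}_{p,q}$ and $(1+\mathcal{L}_2)u\in B^{s_2-\gamma_2}_{p,q}$ (using Proposition \ref{annie} to absorb $(1+\mathcal{L}_2)$ into a $\gamma_2$-derivative loss), choosing intermediate exponents $p_1=p_2$ and regularities $r_1,r_2$ with $r_1\le s_2$, $r_2\le s_2-\gamma_2$ so that the output regularity $s=r_1+r_2-n(2/p_1-1/p^*)$ equals $r+1-\gamma_2$. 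This requires the hypotheses of Proposition \ref{product est 2}: $r_1<n/p_1$, $r_2<n/p_1$, and $r_1+r_2>0$. Translating these and the convergence conditions from the Duhamel estimates back to conditions on $(s_1,s_2,p,\gamma_1,\gamma_2,n)$ — in particular $s_1>r$, $\gamma_1/2>s_2-s_1$, $0\le s_2-r+n/p^*-n/p<\gamma_1$, $\gamma_1\ge 2s_2-r+n/p^*-n/p-s_1$, together with $r_2<n/p_1$ forcing $n/p>\gamma_2/2$ and the analogue of $s_2\ge\gamma_2/2$ — and eliminating $r,r_1,r_2,p^*$ yields the system labeled \eqref{final list 2}. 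The special case in Theorem \ref{special case 1} comes out by making the slack choices (e.g. $r_1=s_2$, $r_2=s_2-\gamma_2$, $r=s_2-1$) in that reduction.

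The main obstacle, as in the first theorem, is bookkeeping: the product estimate has three free internal parameters ($r_1,r_2$ and the intermediate integrability) while the Duhamel steps impose their own constraints, and one must check that the resulting polytope of admissible $(s_1,s_2,p,\gamma_1,\gamma_2)$ is nonempty and contains the stated special case. The only analytic input beyond Section \ref{type 1} is the strict inequality $s_1<n/p_1$ (and the companion $s_2-\gamma_2<n/p_1$) demanded by Proposition \ref{product est 2}, which is what ultimately forces the upper bound $\gamma_2<2n/p$ and allows $s_1$ to dip below $-1$; nothing here requires a new estimate on $e^{t\mathcal{L}_1}$, so Proposition \ref{heat kernel bound} and Proposition \ref{annie} from Section \ref{operator est} suffice. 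Uniqueness and continuity in time follow exactly as in the standard Kato--Ponce argument once the contraction is established: the difference $\Phi(u)-\Phi(v)$ is bilinear in $u-v$ and obeys the same bounds with one power of $M$, so $\Phi$ is a contraction on $X_{T,M}$ for $T$ small (or all $T$ when $\|u_0\|_{s_1,p,q}$ is small), and membership in $\dot C^T_{a;s_2,p,q}$ is inherited from the linear term and the positive power of $t$ in the nonlinear estimates.
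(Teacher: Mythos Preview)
Your approach is essentially the paper's: same contraction-mapping setup in $X_{T,M}$, same three pieces to bound, same use of Proposition~\ref{product est 2} on the factorization $u\otimes(1+\mathcal{L}_2)u$ together with Proposition~\ref{annie} and Proposition~\ref{heat kernel bound}. Two minor differences are worth noting. First, the paper keeps the integrability exponent fixed at $p$ throughout (no intermediate $p^*$); the Sobolev-type flexibility you are building in is already present in Proposition~\ref{product est 2} via the relation $s=r_1+r_2-n(1/p_1+1/p_2-1/p)$, so carrying an extra $p^*$ is redundant and the conditions collapse directly to \eqref{vague list}. Second, your suggested ``slack choices'' $r_1=s_2$, $r_2=s_2-\gamma_2$ for deriving Theorem~\ref{special case 1} would force $s_2<n/p$ (from $r_1<n/p$), which is strictly stronger than the stated hypothesis $n/p>\gamma_2/2$; the paper instead sets $r_2=-\gamma_2/2$ and $r_1=\gamma_2/2+M$ with $M>0$ small (this minimizes $\max\{r_1,r_2+\gamma_2\}$ subject to $r_1+r_2>0$), and that is exactly what yields \eqref{final list 2} and, after letting $M\to 0$, \eqref{final list 3}.
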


With the same set-up as the previous section, our goal is to show that 
\begin{equation}\label{2main parts 1}\aligned &\sup_{t}t^{a}\|e^{t\mathcal{L}_1}u_0\|_{B^{s_2}_{2,q}}<M/3,
\\ &\sup_{t}\|\int_0^t e^{(t-s)\mathcal{L}_1} W(u(s),u(s))ds\|_{B^{s_1}_{2,q}}<M/3,
\\ &\sup_{t}t^a\|\int_0^t e^{(t-s)\mathcal{L}_1} W(u(s),u(s))ds\|_{B^{s_2}_{2,q}}<M/3.
\endaligned
\end{equation}
The first inequality follows exactly as it did in the previous section.

For the second, using Minkowski's inequality and Proposition $\ref{heat kernel bound}$, we have 
\begin{equation}\label{3-1}\aligned &\sup_{t}\|\int_0^t e^{(t-s)\Li} W(u(s),u(s))ds\|_{B^{s_1}_{p,q}}
\\ \leq &\sup_{t}\int_0^t \|e^{(t-s)\Li} W(u(s),u(s))\|_{B^{s_1}_{p,q}}ds
\\ \leq &\sup_{t}\int_0^t |t-s|^{-(s_1-r)/\gamma_1} \|W(u(s),u(s))\|_{B^{r}_{p,q}}ds,
\endaligned
\end{equation}
where $r\leq s_1$ and will be specified later.  Using Proposition $\ref{product est 2}$ and Proposition $\ref{annie}$, we have 
\begin{equation}\label{mal}\aligned &\|W(u(s),u(s))\|_{B^{r}_{p,q}}\leq \|u\tensor (1+\mathcal{L}_2)u\|_{B^{r+1-\gamma_2}_{p,q}}
\\ \leq &\|u\|_{B^{r_1}_{p,q}}\|(1+\mathcal{L}_2)u\|_{B^{r_2}_{p,q}}\leq \|u\|_{B^{r_1}_{p,q}}\|u\|_{B^{r_2+\gamma_2}_{p,q}}\leq \|u\|_{B^{s_2}_{p,q}},
\endaligned 
\end{equation}
provided 
\begin{equation}\label{aldi}\aligned r+1-\gamma_2&\leq r_1+r_2-n/p,
\\ r_1+r_2&>0,
\\ r_1, r_2&<n/p,
\\ s_2&\geq \max\{r_1, r_2+\gamma_2\}.
\endaligned
\end{equation}

Returning to equation $(\ref{3-1})$, we have 
\begin{equation}\aligned  &\sup_{t}\|\int_0^t e^{(t-s)\Li} W(u(s),u(s))ds\|_{B^{s_1}_{p,q}}
\\ \leq &\sup_t \int_0^t |t-s|^{-(s_1-r)/\gamma_1}s^{-2a}s^{2a}\|u(s)\|^2_{B^{s_2}_{p,q}}ds
\\ \leq &C\sup_t \|u\|^2_{a;s_2,p,q}t^{-(s_1-r)/\gamma_1-2(s_2-s_1)/\gamma_1+1}<M/3,
\endaligned
\end{equation}
provided 
\begin{equation}\label{ty1}\aligned 0&\leq (s_1-r)/\gamma_1<1
\\ 1&>2(s_2-s_1)/\gamma_1
\\0&\leq -(s_1-r)/\gamma_1-2(s_2-s_1)/\gamma_1+1.
\endaligned
\end{equation}

Estimating the last term of $(\ref{main parts 1})$ in a similar fashion, we have 
\begin{equation}\aligned &\sup_{t}t^{a}\|\int_0^t e^{(t-s)\Li} W(u(s),u(s))ds\|_{B^{s_2}_{p,q}}
\\ \leq &\sup_{t}t^{a}\int_0^t |t-s|^{-(s_2-r)/\gamma_1} \|W(u(s),u(s))\|_{B^{r}_{2,q}}ds
\\ \leq &\sup_{t}t^{a}\int_0^t |t-s|^{-(s_2-r)/\gamma_1}s^{-2a}s^{2a}\|u(s)\|^2_{B^{s_2}_{p,q}}ds
\\ \leq &C\|u\|_{B^{s_2}_{p,q}}^2\sup_t t^{a}t^{-(s_2-r)/\gamma_1-2(s_2-s_1)/\gamma_1+1}
\\ \leq &CM^2t^{-(s_2-r)/\gamma_1-(s_2-s_1)/\gamma_1+1}<M/3,
\endaligned
\end{equation}
provided 
\begin{equation}\label{ty2}\aligned 0&\leq (s_2-r)/\gamma_1<1,
\\ 1&>2(s_2-s_1)/\gamma_1,
\\ 0&\leq -(s_2-r)/\gamma_1-(s_2-s_1)/\gamma_1+1.
\endaligned
\end{equation}

Our final task is to unify the conditions on the parameters.  The sets of inequalities from equations $(\ref{ty1})$ and $(\ref{ty2})$ can be simplifed to 
\begin{equation}\aligned 0&<s_2-s_1<\gamma_1/2,
\\ s_1&\geq r >s_2-\gamma_1,
\\ \gamma_1&\geq (s_2-s_1)+(s_2-r).
\endaligned
\end{equation}
Incorporating the inequalities from $(\ref{aldi})$, we have 
\begin{equation}\label{vague list}\aligned 0&<s_2-s_1<\gamma_1/2,
\\ s_1&\geq r >s_2-\gamma_1,
\\ \gamma_1&\geq (s_2-s_1)+(s_2-r),
\\ r&+1-\gamma_2\leq r_1+r_2-n/p,
\\ r_1&+r_2>0,
\\ r_1&, r_2<n/p,
\\ s_2&\geq \text{max}\{r_1,r_2+\gamma_2\},
\endaligned
\end{equation}
and this completes the proof of Theorem $\ref{full generality}$.  To obtain the results in Theorem $\ref{special case 1}$, we fix the values of the parameters $r_1, r_2,$ and $r$ in the following way.  First, since our primary interest is in minimizing $s_1$ and $s_2$, we see from the last inequality that this is helped by minimizing $\max\{r_1, r_2+\gamma_2\}$, subject to the constraints $r_1+r_2>0$ and $r_1, r_2<n/p$.  This is accomplished by choosing $r_2=-\gamma_2/2$ and $r_1=\gamma_2/2+M$, where $M$ is some positive number.  Additionally choosing the fourth inequality in the list $(\ref{vague list})$ to be an equality, the list $(\ref{vague list})$ becomes 
\begin{equation}\label{vague list 2}\aligned 0&<s_2-s_1<\gamma_1/2,
\\ s_1&\geq r >s_2-\gamma_1,
\\ \gamma_1&\geq (s_2-s_1)+(s_2-r),
\\ r&=-1+\gamma_2+M-n/p,
\\ n/p&>\gamma_2/2+M,
\\ s_2&\geq \gamma_2/2+M.
\endaligned
\end{equation}
Using the equality in line $4$ to eliminate $r$ from the other inequalities, and then removing extraneous inequalities, we finally get 
\begin{equation}\label{final list 2}\aligned 0&<s_2-s_1<\gamma_1/2,
\\ s_1&\geq \gamma_2+M-n/p-1,
\\ \gamma_1&\geq 2s_2-s_1-\gamma_2-M+n/p+1,
\\ n/p&>\gamma_2/2+M,
\\ s_2&\geq \gamma_2/2+M.
\endaligned
\end{equation}
Eliminating the free parameter $M$ weakens this to
\begin{equation}\label{final list 3}\aligned 0&<s_2-s_1<\gamma_1/2,
\\ s_1&>\gamma_2-n/p-1,
\\ \gamma_1&\geq 2s_2-s_1-\gamma_2+n/p+1,
\\ n/p&>\gamma_2/2,
\\ s_2&\geq \gamma_2/2,
\endaligned
\end{equation}
which finishes Theorem $\ref{special case 1}$.

\section{Operator estimate for $e^{t\mathcal{L}}$}\label{operator est}

We define the Fourier multiplier $\mathcal{L}^g_\gamma$ by 
\begin{equation*}\mathcal{L}^g_\gamma u(x)=\int -\frac{|\xi|^{\gamma}}{g(|\xi|)}\hat{u}(\xi)e^{ix\cdot \xi}d\xi,
\end{equation*}
where $\gamma\in \mathbb{R}$ and $g:\mathbb{R}\rightarrow \mathbb{R}$ is nondecreasing and bounded below by 1. We begin this section with a straightforward calculation.

\begin{proposition}\label{annie} Let $1<p<\infty$ and let $G$ be the operator with symbol $1/g$.  Then $\mathcal{L}^g_\gamma: H^{s_1+\gamma,p}(\mathbb{R}^n)\rightarrow H^{s_1,p}(\mathbb{R}^n)$, with 
\begin{equation*}\|\mathcal{L}^g_\gamma f\|_{H^{s_1,p}}\leq C\|f\|_{H^{s_1+\gamma,p}},
\end{equation*}
provided $G$ is $L^p(\mathbb{R}^n)$ bounded.
\end{proposition}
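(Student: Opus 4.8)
The plan is to reduce everything to the boundedness of $G$ on $L^p$ together with the standard fact that the Bessel potential operators $(1-\triangle)^{\sigma/2}$ are isometric isomorphisms between the scales of Sobolev spaces $H^{s,p}$. Write $J^{\sigma}=(1-\triangle)^{\sigma/2}$, so by definition $\|f\|_{H^{s,p}}=\|J^{s}f\|_{L^p}$. The estimate $\|\mathcal{L}^g_\gamma f\|_{H^{s_1,p}}\le C\|f\|_{H^{s_1+\gamma,p}}$ is then equivalent, after moving the Bessel potentials to the $L^p$ level, to the statement that the operator
\begin{equation*}
T:=J^{s_1}\,\mathcal{L}^g_\gamma\,J^{-(s_1+\gamma)}
\end{equation*}
is bounded on $L^p(\mathbb{R}^n)$. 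Since all of these are Fourier multipliers and hence commute, $T$ has symbol
\begin{equation*}
m(\xi)=(1+|\xi|^2)^{s_1/2}\cdot\Bigl(-\frac{|\xi|^{\gamma}}{g(|\xi|)}\Bigr)\cdot(1+|\xi|^2)^{-(s_1+\gamma)/2}
=-\frac{|\xi|^{\gamma}}{(1+|\xi|^2)^{\gamma/2}}\cdot\frac{1}{g(|\xi|)}.
\end{equation*}

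So I would factor $T=-M_\gamma\circ G$, where $M_\gamma$ is the Fourier multiplier with symbol $|\xi|^\gamma(1+|\xi|^2)^{-\gamma/2}$ and $G$ is the operator with symbol $1/g$. The operator $G$ is $L^p$-bounded by hypothesis, so it remains to check that $M_\gamma$ is $L^p$-bounded for $1<p<\infty$. This follows from the Mikhlin multiplier theorem: the symbol $\mu(\xi)=|\xi|^\gamma(1+|\xi|^2)^{-\gamma/2}=(|\xi|^2/(1+|\xi|^2))^{\gamma/2}$ is smooth away from the origin, bounded (it takes values in $[0,1)$), and one checks by induction that $|\partial^\alpha\mu(\xi)|\le C_\alpha|\xi|^{-|\alpha|}$ for all multi-indices $\alpha$ — indeed $\mu$ is homogeneous of degree $0$ for $|\xi|$ large and smooth with the right derivative bounds near $0$ since $|\xi|^2/(1+|\xi|^2)$ is smooth there. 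Hence $M_\gamma$ is bounded on $L^p$ for every $1<p<\infty$, and composing gives $\|Tf\|_{L^p}\le C\|f\|_{L^p}$, which is exactly the claimed inequality after undoing the Bessel-potential substitution. Continuity of $\mathcal{L}^g_\gamma:H^{s_1+\gamma,p}\to H^{s_1,p}$ is immediate from the bound.

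The only real subtlety — and the step I would be most careful about — is the interplay between the homogeneous factor $|\xi|^\gamma$ appearing in $\mathcal{L}^g_\gamma$ and the \emph{inhomogeneous} Bessel potentials defining $H^{s,p}$: near the origin $|\xi|^\gamma$ is not smooth when $\gamma$ is not an even integer, so one cannot naively claim $\mathcal{L}^g_\gamma$ itself is a nice multiplier. The point of routing through the combined symbol $\mu(\xi)=(|\xi|^2/(1+|\xi|^2))^{\gamma/2}$ is precisely that the $|\xi|^\gamma$ singularity is tamed by the compensating $(1+|\xi|^2)^{-\gamma/2}$: the ratio is $C^\infty$ on all of $\mathbb{R}^n$ (being a smooth function of the smooth, nonvanishing-at-infinity quantity $|\xi|^2/(1+|\xi|^2)$) and behaves like $|\xi|^\gamma$ near $0$, which for $\gamma>0$ still gives the Mikhlin derivative bounds. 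One should record that this is where $\gamma$ being real (as opposed to a negative value that would blow up at the origin) is used, though for the applications in the paper $\gamma=\gamma_i>0$ or $\gamma=1-\gamma_2$ shifted appropriately, so the relevant exponents are handled; if $\gamma$ could be negative one would instead split into high and low frequencies and treat the low-frequency piece separately. With that observation in place the argument is routine.
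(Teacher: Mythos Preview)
Your argument is correct and is essentially the same as the paper's, only more carefully unpacked: the paper writes the one-line factorization $\|\mathcal{L}^g_\gamma f\|_{H^{s_1,p}}=\|G\bigl((-\triangle)^{\gamma/2}f\bigr)\|_{H^{s_1,p}}\le C\|f\|_{H^{s_1+\gamma,p}}$, tacitly using both the $L^p$-boundedness of $G$ (transferred to $H^{s_1,p}$ by commutation with Bessel potentials) and the mapping $(-\triangle)^{\gamma/2}:H^{s_1+\gamma,p}\to H^{s_1,p}$ as known facts; you simply make the latter explicit by verifying the Mikhlin bounds for the symbol $\mu(\xi)=|\xi|^\gamma(1+|\xi|^2)^{-\gamma/2}$. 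One small correction: your aside that $\mu$ is $C^\infty$ on all of $\mathbb{R}^n$ is not quite right when $\gamma$ is not an even integer, since $t\mapsto t^{\gamma/2}$ fails to be smooth at $t=0$; but this is harmless, because the Mikhlin condition $|\partial^\alpha\mu(\xi)|\le C_\alpha|\xi|^{-|\alpha|}$ only needs to hold for $\xi\neq 0$, and near the origin $\partial^\alpha\mu(\xi)=O(|\xi|^{\gamma-|\alpha|})=|\xi|^{\gamma}\cdot O(|\xi|^{-|\alpha|})$, which is indeed dominated by $|\xi|^{-|\alpha|}$ since $\gamma>0$.
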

The proof follows directly from the definition of $\mathcal{L}^g_\gamma$, since we have 
\begin{equation*}\|\mathcal{L}^g_\gamma f\|_{H^{s_1,p}}= \|G \((-\triangle)^{\gamma/2} f\)\|_{H^{s_1,p}}\leq \|f\|_{H^{s_1+\gamma,p}},
\end{equation*}
given the assumption on $G$.

Note that if $g$ is bounded below by $1$ and $|g^{(k)}(r)|\leq Cr^{-k}$, for $1\leq k\leq n/2+1$, then G will satisfy the stated condition.  

Now we turn to the main topic of this section, the operator $e^{t\mathcal{L}^g_\gamma}$, which is defined to be the Fourier multiplier with symbol $e^{-t|\xi|^\gamma/g(|\xi|)}$.  The goal in this section is to establish operator bounds for $e^{t\mathcal{L}_\gamma}$ in the case where $\gamma>1$, and following the general outline of the same task for $e^{t\triangle}$, we need to first establish $L^p-L^q$ boundedness for the operator.

\begin{proposition}\label{lpsemi} Let $1\leq p\leq q\leq \infty$, and assume $\gamma>1$.  Then 
\begin{equation*}e^{t\mathcal{L}^g_\gamma}:L^p(\mathbb{R}^n)\rightarrow L^q(\mathbb{R}^n),
\end{equation*}
and we have the bound 
\begin{equation}\label{mal2}\|e^{t\mathcal{L}^g_\gamma}f\|_{L^q}\leq Ct^{-(n/p-n/q)/\gamma}\|f\|_{L^p},
\end{equation}
provided 
\begin{equation}\label{mal condition}\sup_{t\in (0, 1)} \left \lvert \int_0^\infty \partial_r^{n+1}\(r^{n-1}e^{-r^\gamma/g(rt^{-1/\gamma})}\) dr\right \rvert<\infty.
\end{equation}
\end{proposition}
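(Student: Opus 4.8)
The plan is to reduce the operator bound to a kernel estimate via Young's inequality, and then to control the kernel by its Fourier-transform-side expression using the standard "integration by parts in frequency" trick. Since $e^{t\mathcal{L}^g_\gamma}$ is convolution with the function $K_t$ whose Fourier transform is $e^{-t|\xi|^\gamma/g(|\xi|)}$, Young's inequality gives $\|e^{t\mathcal{L}^g_\gamma}f\|_{L^q}\leq \|K_t\|_{L^r}\|f\|_{L^p}$ with $1+1/q=1/r+1/p$. So it suffices to prove $\|K_t\|_{L^r}\leq C t^{-(n/p-n/q)/\gamma}$ for the relevant range of $r$. By interpolation it is enough to handle the two endpoints $r=1$ and $r=\infty$, i.e. to show $\|K_t\|_{L^1}\leq C$ (uniformly in $t\in(0,1)$, say) and $\|K_t\|_{L^\infty}\leq Ct^{-n/\gamma}$.

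First I would exploit the natural parabolic scaling. Writing $\xi = t^{-1/\gamma}\eta$ one sees that $K_t(x) = t^{-n/\gamma}K^*_t(t^{-1/\gamma}x)$, where $K^*_t$ is the inverse Fourier transform of $\eta\mapsto e^{-|\eta|^\gamma/g(|\eta|t^{-1/\gamma})}$; note the residual $t$-dependence inside $g$ is exactly what forces the uniform-in-$t$ hypothesis (\ref{mal condition}). The $L^\infty$ bound $\|K_t\|_{L^\infty}\leq Ct^{-n/\gamma}$ then follows immediately provided $\|K^*_t\|_{L^\infty}$ is bounded uniformly in $t$, which in turn follows from the trivial bound $\|K^*_t\|_{L^\infty}\leq \|e^{-|\eta|^\gamma/g(\cdot)}\|_{L^1_\eta}$ together with $g\geq 1$ and $\gamma>1$ (the integral $\int_{\mathbb{R}^n}e^{-|\eta|^\gamma}d\eta$ converges). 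The $L^1$ bound $\|K_t\|_{L^1}=\|K^*_t\|_{L^1}\leq C$ is the substantive point. For $|x|\lesssim 1$ we again use $\|K^*_t\|_{L^\infty}\leq C$; for $|x|\gtrsim 1$ we need decay, and this is where (\ref{mal condition}) enters. Passing to polar coordinates in $\xi$ and using that the symbol is radial, $K^*_t(x)$ becomes (up to constants) a one-dimensional oscillatory-type integral $\int_0^\infty r^{n-1}e^{-r^\gamma/g(rt^{-1/\gamma})}(\text{Bessel factor involving }r|x|)\,dr$; integrating by parts $n+1$ times in $r$ against the radial-plane-wave / Bessel kernel produces a factor $|x|^{-(n+1)}$ (after accounting for the $n-1$ powers of $r$ and the asymptotics of the Bessel function, which contributes the "$r^{n-1}$" grouping) times exactly the integral appearing in (\ref{mal condition}). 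Hence $|K^*_t(x)|\leq C|x|^{-(n+1)}$ uniformly in $t$, which is integrable at infinity in $\mathbb{R}^n$, giving $\|K^*_t\|_{L^1}\leq C$.

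Combining the two endpoints by the Riesz–Thorin interpolation (or just by Hölder, since $K_t(x) = t^{-n/\gamma}K^*_t(t^{-1/\gamma}x)$ and $\|K_t\|_{L^r} = t^{-n/\gamma + n/(r\gamma)}\|K^*_t\|_{L^r} = t^{-(n/p-n/q)/\gamma}\|K^*_t\|_{L^r}$ for the exponent $r$ determined above) yields the claimed bound $\|e^{t\mathcal{L}^g_\gamma}f\|_{L^q}\leq Ct^{-(n/p-n/q)/\gamma}\|f\|_{L^p}$.

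The main obstacle is the kernel decay estimate for large $|x|$: organizing the repeated integration by parts in the radial variable so that the boundary terms vanish and the derivatives land in exactly the combination $\partial_r^{n+1}(r^{n-1}e^{-r^\gamma/g(rt^{-1/\gamma})})$ that hypothesis (\ref{mal condition}) is tailored to control, all while keeping every estimate uniform in $t\in(0,1)$. One has to be a little careful near $r=0$ (where the Bessel asymptotics and the $r^{n-1}$ factor interact) and near $r=\infty$ (where the super-exponential decay of $e^{-r^\gamma/g}$, using $\gamma>1$ and $g$ growing no faster than is compatible with the hypothesis, kills all boundary terms). Everything else — Young, scaling, the $L^\infty$ and small-$|x|$ pieces — is routine.
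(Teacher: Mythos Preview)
Your proposal is correct and follows essentially the same route as the paper: reduce to a kernel $L^r$ bound via Young's inequality, extract the $t$-power by the parabolic rescaling $\xi\mapsto t^{-1/\gamma}\xi$, and obtain the needed decay of the rescaled kernel at infinity by integrating by parts $n+1$ times in the radial frequency variable, which is exactly where hypothesis~\eqref{mal condition} is consumed. The only cosmetic difference is that you organize the kernel bound as ``$L^1\cap L^\infty$ then interpolate'' whereas the paper bounds $\|K^*_t\|_{L^r}$ directly by splitting the outer radial integral at $1$; these are the same estimate in slightly different clothing.
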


\begin{proof}
For notational convenience, we will suppress the subscript $\gamma$ and the superscript $g$ on the operator for the duration of the proof.  We first prove the result in the special case that $g$ is a constant function and, without loss of generality, further assume the constant is one.  Setting $e^{t\mathcal{L}}f=e^{t\mathcal{L}}\delta\ast f$, where the Fourier Transform of $e^{t\mathcal{L}}\delta(x)$ is equal to $e^{-t|\xi|^\gamma}$, and applying  Young's inequality, we get that 
\begin{equation*}\|e^{t\mathcal{L}}f\|_{L^q}\leq \|e^{t\mathcal{L}}\delta\|_{L^r}\|f\|_{L^p},
\end{equation*}
where $1+1/q=1/r+1/p$.  Formally, we have that 
\begin{equation*}e^{t\mathcal{L}}\delta(\xi)=C\int_{\mathbb{R}^n}e^{-t|x|^\gamma}e^{ix\cdot\xi}dx.
\end{equation*}
Making the variable change $x\rightarrow t^{1/\gamma}x$, we get 
\begin{equation*}e^{t\mathcal{L}}\delta(\xi)=Ct^{-n/\gamma}\int_{\mathbb{R}^n}e^{-|x|^\gamma} e^{it^{-1/\gamma}x\cdot\xi}dx.
\end{equation*}
Taking the $L^r(\mathbb{R}^n)$ norm gives 
\begin{equation*}\|e^{t\mathcal{L}}\delta\|_{L^r}=Ct^{-n/\gamma}\(\int_{\mathbb{R}^n}\left \lvert\int_{\mathbb{R}^n}e^{-|x|^\gamma} e^{it^{-1/\gamma}x\cdot\xi}dx \right \rvert^rd\xi\)^{1/r}.
\end{equation*}
Making the variable change $\xi\rightarrow t^{-1/\gamma}\xi$, this finally becomes 
\begin{equation*}\|e^{t\mathcal{L}}\delta\|_{L^r}=Ct^{-n/\gamma+n/(r\gamma)}\(\int_{\mathbb{R}^n}\left \lvert\int_{\mathbb{R}^n}e^{-|x|^\gamma} e^{ix\cdot\xi}dx \right \rvert^rd\xi\)^{1/r}.
\end{equation*}

Since $1-1/r=1/p-1/q$, the only remaining task is to show that  the integral is finite.  Changing to polar coordinates and bounding the angular portions by a constant, we have 
\begin{equation*}\(\int_{\mathbb{R}^n} \babsl \int_{\mathbb{R}^n}e^{-|x|^\gamma} e^{ix\cdot\xi}dx \babsr^rd\xi\)^{1/r}\leq C\(\int_{0}^\infty \tau_1^{n-1}\babsl\int_{0}^\infty \tau_2^{n-1}e^{-\tau_2^\gamma} e^{i\tau_1\tau_2}d\tau_2\babsr^rd\tau_1\)^{1/r}.
\end{equation*}

The integral over the region where $\tau_1$ is between $0$ and $1$ is clearly finite, so we only have left to consider the $\tau_1$ integral over the region $[1,\infty)$.  To that end, we integrate by parts $k$ times and get 
\begin{equation*}\aligned &\(\int_{1}^\infty \tau_1^{n-1}\babsl\int_{0}^\infty \tau_2^{n-1}e^{-\tau_2^\gamma} \tau_1^{-k}\partial_{\tau_2}^{(k)}e^{i\tau_1\tau_2}d\tau_2 \babsr^rd\tau_1\)^{1/r}
\\ \leq &C\(\int_{1}^\infty \tau_1^{n-1-kr}(C+\babsl\int_{0}^\infty  \partial_{\tau_2}^{(k)} \(\tau_2^{n-1}e^{-\tau_2^\gamma}\) d\tau_2\babsr^rd\tau_1)\)^{1/r}.
\endaligned
\end{equation*}

The integral in $\tau_1$ is finite provided $n<kr$, which means $k$ must be at least the smallest integer greater than $n/r$, and setting $k=n+1$ will satisfy this for any choice of $r$.  For the remaining integral, a combinatorial argument shows that the ``worst" term is of the form $\tau_{2}^{n+\gamma-(n+2)}e^{-\tau_2^\gamma}$, which is integrable provided $\gamma>1$, and this concludes the argument for the case when $g$ is a constant function.  

For the general case, following the argument above, the result will follow provided 
\begin{equation}\int_0^\infty \partial_r^{n+1}\(r^{n-1}e^{-r^\gamma/g(rt^{-1/\gamma})}\) dr,
\end{equation}
can be bounded independently of $t$ for $t<1$, which is equation $(\ref{mal condition})$. 

\end{proof}

Before moving on, we address two cases where equation $(\ref{mal condition})$ holds.  
\begin{corollary}\label{kernel bound cor 1} If $g$ satisfies 
\begin{equation}\label{partial mihklin}|g^{(k)} (r)|\leq Cr^{-k},
\end{equation}
for each $0\leq k\leq n+1$, then equation $(\ref{mal condition})$ holds (and thus equation $(\ref{mal2})$ holds for the associated operator $e^{t\mathcal{L}})$.
\end{corollary}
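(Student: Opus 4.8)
The plan is to bound the quantity appearing inside the absolute value in \eqref{mal condition}, namely $\int_0^\infty \partial_r^{n+1}\bigl(r^{n-1}e^{-r^\gamma/g(rt^{-1/\gamma})}\bigr)\,dr$, uniformly for $t\in(0,1)$. First I would fix $t\in(0,1)$, write $h(r)=g(rt^{-1/\gamma})$, and observe that by the chain rule the hypothesis \eqref{partial mihklin} on $g$ transfers to $h$ with the \emph{same} constants: since $g^{(k)}(s)\le Cs^{-k}$, we get $|h^{(k)}(r)| = |t^{-k/\gamma} g^{(k)}(rt^{-1/\gamma})| \le C\, t^{-k/\gamma} (rt^{-1/\gamma})^{-k} = C r^{-k}$, with no $t$-dependence in the bound. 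Thus it suffices to prove a bound on $\int_0^\infty \partial_r^{n+1}\bigl(r^{n-1} e^{-r^\gamma/h(r)}\bigr)\,dr$ that is uniform over all functions $h$ bounded below by $1$ and satisfying $|h^{(k)}(r)|\le Cr^{-k}$ for $0\le k\le n+1$.

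The key step is to expand $\partial_r^{n+1}\bigl(r^{n-1}e^{-r^\gamma/h(r)}\bigr)$ by the Leibniz and Faà di Bruno formulas and estimate each resulting term, splitting the integral at $r=1$. On $[0,1]$ the integrand is controlled because $r^{n-1}$ and its derivatives, the exponent $r^\gamma/h(r)$, and all derivatives of $1/h$ (by \eqref{partial mihklin} and $h\ge 1$, one has $|(1/h)^{(k)}(r)|\le C$ on $[0,1]$, or more carefully $\le Cr^{-k}$ but the net effect of the $r^{n-1}$ factor keeps things integrable near $0$ exactly as in the constant-$g$ case in Proposition \ref{lpsemi}) are all bounded, so the contribution is $O(1)$ independently of $h$. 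On $[1,\infty)$ I would show that every term produced by the differentiation has the schematic form (polynomial in $r$)$\times$(finite product of derivatives of $r^\gamma/h(r)$)$\times e^{-r^\gamma/h(r)}$, and that each such term decays like $e^{-cr}$ for large $r$: the crucial point is that $r^\gamma/h(r) \ge r^\gamma/ (Cr^{\cdot}) $ — more precisely, from $|h'|\le C/r$ we get $h(r)\le h(1)+C\log r$, so $h(r)$ grows at most logarithmically, hence $r^\gamma/h(r)\gtrsim r^\gamma/\log r \to \infty$ and in particular dominates any polynomial; combined with the estimates $|\partial_r^j(r^\gamma/h(r))|\le C r^{\gamma-j}$ (which follow by differentiating the quotient and using \eqref{partial mihklin}), every term is bounded by $C r^{N} e^{-c r^{\gamma}/\log r}$ for some fixed $N$ and $c>0$, which is integrable on $[1,\infty)$ with a bound depending only on $\gamma$, $n$, and the constant $C$ in \eqref{partial mihklin}. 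This is exactly the same mechanism ("the worst term is $\tau_2^{n+\gamma-(n+2)}e^{-\tau_2^\gamma}$, integrable since $\gamma>1$") used in the proof of Proposition \ref{lpsemi}, now carried out with the variable coefficient $1/h$.

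The main obstacle is the bookkeeping in the Faà di Bruno expansion of $\partial_r^{n+1} e^{-r^\gamma/h(r)}$: one must check that no term acquires an uncontrolled negative power of $r$ near $r=0$ (this is where the $r^{n-1}$ prefactor and the hypothesis that \eqref{partial mihklin} holds down to $k=0$ are used) and that the polynomial growth from differentiating $r^\gamma/h(r)$ repeatedly — which can produce factors up to $r^{(n+1)\gamma}$ — is always beaten by the exponential decay, for which $\gamma>1$ (inherited from Proposition \ref{lpsemi}'s standing assumption) together with the sub-polynomial growth of $h$ is enough. Since all constants in these estimates are explicit functions of $n$, $\gamma$, and the Mikhlin-type constant $C$ and none involve $t$, the supremum over $t\in(0,1)$ in \eqref{mal condition} is finite, and by Proposition \ref{lpsemi} the bound \eqref{mal2} holds for $e^{t\mathcal{L}^g_\gamma}$.
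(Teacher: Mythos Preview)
Your proposal is correct and follows essentially the same strategy as the paper: expand via the Leibniz rule, use the Mikhlin-type bounds on $g$ (transferred to $h(r)=g(rt^{-1/\gamma})$ by the chain rule, exactly as the paper does term by term in its $n=3$ illustration) to control the derivative factors uniformly in $t$, and then reduce to the constant-$g$ case already handled in Proposition~\ref{lpsemi}. One simplification worth noting: since the hypothesis \eqref{partial mihklin} includes $k=0$, the function $g$ (hence $h$) is bounded above by $C$, so you may replace your logarithmic-growth argument with the direct estimate $e^{-r^\gamma/h(r)}\le e^{-r^\gamma/C}$, which is precisely what the paper uses.
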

Before starting the proof, we remark that any function $f$ which satisfies the  Mihklin multiplier condition (that $|f^{(k)} (x)|\leq C|x|^{-k}$ holds for all non-negative $k$) and is bounded below by one will satisfy  $(\ref{partial mihklin})$.

\begin{proof}

Computing the derivative in $(\ref{mal condition})$ gives  
\begin{equation}\label{mal condition constant}\int_0^\infty \sum_{k=0}^{n-1}\partial_r^{n-1-k}(r^{n-1})\partial_r^{2+k}\(e^{-r^\gamma/g(rt^{-1/\gamma})}\) dr.
\end{equation}
Computing the derivatives on the exponential results in products whose terms are the exponential, powers of $r$, and derivatives of $\(g(t^{-1/\gamma}r)\)^{-1}$.  While there are more terms in this expansion than in the previous case (where $g$ is assumed to be constant), each term here can be associated to a term in the constant $g$ case.

To help illustrate this, we consider the special case of $n=3$.  Then equation $(\ref{mal condition constant})$ becomes  
\begin{equation*}\int_0^\infty \sum_{k=0}^{2}\partial_r^{2-k}(r^{2})\partial_r^{2+k}\(e^{-r^\gamma/g(rt^{-1/\gamma})}\) dr=\int_0^\infty I_1+I_2+I_3,
\end{equation*}
where 
\begin{equation*}\aligned I_1&=\partial_r^2\(e^{-r^\gamma/g(rt^{-1/\gamma})}\),
\\ I_2&=r\partial_r^3\(e^{-r^\gamma/g(rt^{-1/\gamma})}\),
\\ I_3&=r^2\partial_r^4\(e^{-r^\gamma/g(rt^{-1/\gamma})}\).
\endaligned
\end{equation*}
We begin by considering $I_1$, which can be re-written as 
\begin{equation*}\aligned &I_1=J_1e^{-r^\gamma/g(rt^{-1/\gamma})},
\endaligned
\end{equation*}
where 
\begin{equation*}\aligned J_1&=\(\frac{-r^{\gamma-1}}{g(rt^{-1/\gamma})}+\frac{r^\gamma g'(rt^{-1/\gamma})t^{-1/\gamma}}{(g(rt^{-1\gamma}))^2}\)^2
 +\frac{-r^{\gamma-2}}{g(rt^{-1/\gamma})}
\\ +&\frac{2r^{\gamma-1} g'(rt^{-1/\gamma})t^{-1/\gamma}}{(g(rt^{-1\gamma}))^2}
+ \frac{r^{\gamma} g''(rt^{-1/\gamma})t^{-2/\gamma}}{(g(rt^{-1\gamma}))^2}-  \frac{2r^{\gamma} (g'(rt^{-1/\gamma}))^2t^{-2/\gamma}}{(g(rt^{-1\gamma}))^3}
\endaligned
\end{equation*}

Because $g$ is bounded below by one, we can bound $J_1$ by 
\begin{equation*}\aligned J_1&\leq \(-r^{\gamma-1}+r^\gamma g'(rt^{-1/\gamma})t^{-1/\gamma}\)^2
 +r^{\gamma-2}
\\ +&2r^{\gamma-1} g'(rt^{-1/\gamma})t^{-1/\gamma}
+ r^{\gamma} g''(rt^{-1/\gamma})t^{-2/\gamma}+2r^{\gamma} (g'(rt^{-1/\gamma}))^2t^{-2/\gamma}
\\ =& \(-r^{\gamma-1}+r^{\gamma-1} g'(rt^{-1/\gamma})(rt^{-1/\gamma})\)^2
 +r^{\gamma-2}
\\ +&2r^{\gamma-2} g'(rt^{-1/\gamma})(rt^{-1/\gamma})+ r^{\gamma-2} g''(rt^{-1/\gamma})\(rt^{-/\gamma-2}\)^2 +2r^{\gamma} \(g'(rt^{-1/\gamma})rt^{-1/\gamma}\)^2.
\endaligned
\end{equation*}
Recalling $(\ref{partial mihklin})$, we finally see that $J_1$ satisfies 
\begin{equation*}J_1\leq C\(r^{2(\gamma-1)}+r^{\gamma-1} +r^{\gamma-2}+r^\gamma\),
\end{equation*}
and plugging this back into the equation for $I_1$ gives 
\begin{equation}\aligned I_1\leq &C\(r^{2(\gamma-1)}+r^{\gamma-1} +r^{\gamma-2}+r^\gamma\)e^{-r^\gamma/g(rt^{-1/\gamma})}
\\ \leq &C\(r^{2(\gamma-1)}+r^{\gamma-1} +r^{\gamma-2}+r^\gamma\)e^{-Cr^\gamma},
\endaligned
\end{equation}
where we have again used that $g$ is bounded below by one.  The key observation is that $I_1$ is now bounded by (up to a constant) a term from the constant $g$ case.  $I_2$ and $I_3$ can be bounded similarly, reducing the problem to the constant $g$ case, where the result is already known.  The same argument (though clearly requiring significantly more computations) holds in the general $n$ case, and this concludes the proof.
\end{proof} 

Finally, we consider the case where $g$ is (essentially) the natural log. This is to make our results compatible with the energy bounds for the generalized Leray-$\alpha$ equation established in  \cite{kazuo}.

\begin{lemma}\label{lpsemi g}Let $g(r)\leq Cr^\varepsilon$ for any positive $\varepsilon$ and assume 
\begin{equation}\label{pseudo mihklin}|g^{(m)}(r)|\leq Cr^{-m}
\end{equation}
for any integer $m$ such that $1\leq m\leq n/2+1$ and let $p\leq q$.  Then
\begin{equation*}e^{t\mathcal{L}_\gamma}:L^p(\mathbb{R}^n)\rightarrow L^q(\mathbb{R}^n).
\end{equation*}
For $p=q$, we have 
\begin{equation}\label{mal36}\|e^{t\mathcal{L}}f\|_{L^p}\leq C\|f\|_{L^p},
\end{equation}
and for $p<q$ we have
\begin{equation}\label{mal3}\|e^{t\mathcal{L}}f\|_{L^q}\leq Ct^{-(n/p-n/q)/\gamma-\varepsilon}\|f\|_{L^p},
\end{equation}
for any choice of small $\varepsilon>0$.
\end{lemma}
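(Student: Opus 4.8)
The proof splits according to the two regimes. For the endpoint case $p=q$, estimate (\ref{mal36}), the plan is to show that for every $t>0$ the Fourier multiplier $m_t(\xi)=e^{-t|\xi|^\gamma/g(|\xi|)}=e^{-th(\xi)}$, with $h(\xi):=|\xi|^\gamma/g(|\xi|)$, satisfies the H\"ormander--Mikhlin bounds $|\partial_\xi^\alpha m_t(\xi)|\le C_\alpha|\xi|^{-|\alpha|}$ for $|\alpha|\le\lfloor n/2\rfloor+1$ with $C_\alpha$ independent of $t$; the classical multiplier theorem then gives the $t$-uniform $L^p$ bound. The estimate that makes this work is
\begin{equation*}|\partial_\xi^\beta h(\xi)|\le C_\beta\,h(\xi)\,|\xi|^{-|\beta|}\qquad(\xi\ne0,\ 1\le|\beta|\le n/2+1),\end{equation*}
i.e.\ $h$ has the same differentiation scaling as $|\xi|^\gamma$. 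This is exactly where hypothesis (\ref{pseudo mihklin}) is used: from $g\ge1$ and $|g^{(m)}(r)|\le Cr^{-m}$ one gets $|\partial_\xi^j(1/g(|\xi|))|\le C\,g(|\xi|)^{-1}|\xi|^{-j}$ for $j\le n/2+1$ (keeping the $g^{-1}$ is essential), and Leibniz then yields $|\partial^\beta h|\le C|\xi|^{\gamma-|\beta|}g(|\xi|)^{-1}=C\,h(\xi)|\xi|^{-|\beta|}$. Plugging this into the Fa\`a di Bruno expansion $\partial^\alpha m_t=m_t\sum_{\ell\ge1}(\text{products of }t\,\partial^{\beta_i}h)$ (and using $|m_t|\le1$ for $\alpha=0$), each summand is at most a constant times $|\xi|^{-|\alpha|}\,(th(\xi))^\ell e^{-th(\xi)}$, which is $\le C_\alpha|\xi|^{-|\alpha|}$ because $\sup_{x\ge0}x^\ell e^{-x}<\infty$. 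Note that this step uses only $g\ge1$ and (\ref{pseudo mihklin}), not the growth bound on $g$.

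For $p<q$, estimate (\ref{mal3}), I would fix a small $\varepsilon'>0$ with $\gamma-\varepsilon'>1$; using $g(r)\le Cr^{\varepsilon'}$ one has $h(\xi)\ge c_0|\xi|^{\gamma-\varepsilon'}$ for $|\xi|\ge1$. Factor the symbol as
\begin{equation*}e^{-th(\xi)}=m^t_1(\xi)\,e^{-\frac{c_0}{2}t|\xi|^{\gamma-\varepsilon'}},\qquad m^t_1(\xi):=e^{-t\phi(\xi)},\quad \phi(\xi):=h(\xi)-\tfrac{c_0}{2}|\xi|^{\gamma-\varepsilon'},\end{equation*}
so that $e^{t\mathcal{L}^g_\gamma}$ is the composition of the operator with symbol $m^t_1$ and the operator $e^{\frac{c_0}{2}t\mathcal{L}^1_{\gamma-\varepsilon'}}$ (constant $g\equiv1$, symbol $-|\xi|^{\gamma-\varepsilon'}$). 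Proposition \ref{lpsemi}, applied with $g\equiv1$ and exponent $\gamma-\varepsilon'>1$, bounds the $L^q$-norm of the second factor applied to $f$ by $Ct^{-(n/p-n/q)/(\gamma-\varepsilon')}\|f\|_{L^p}$, while for the first factor I would verify that $m^t_1$ is a H\"ormander--Mikhlin multiplier uniformly for $t\in(0,1)$: on $|\xi|\ge1$ one has $\phi\ge\frac{c_0}{2}|\xi|^{\gamma-\varepsilon'}\ge0$ together with $h\le2\phi$ and $|\xi|^{\gamma-\varepsilon'}\le\frac{2}{c_0}\phi$, hence $|\partial^\beta\phi|\le C_\beta\,\phi(\xi)|\xi|^{-|\beta|}$ and the estimate of the previous paragraph applies with $\phi$ for $h$; on $|\xi|<1$, $\phi$ is bounded (so $0\le m^t_1\le e^{C_1}$ for $t<1$) and $|\partial^\beta\phi|\le C|\xi|^{\gamma-\varepsilon'-|\beta|}$, which gives $|\partial^\alpha m^t_1|\le C|\xi|^{-|\alpha|}$ since $|\xi|^{\ell(\gamma-\varepsilon')}\le1$. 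Composing, $\|e^{t\mathcal{L}^g_\gamma}f\|_{L^q}\le Ct^{-(n/p-n/q)/(\gamma-\varepsilon')}\|f\|_{L^p}$ for $t\in(0,1)$, and since $(n/p-n/q)/(\gamma-\varepsilon')\to(n/p-n/q)/\gamma$ as $\varepsilon'\to0$, taking $\varepsilon'$ small in terms of the prescribed $\varepsilon$ gives (\ref{mal3}) on that range.

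The step I expect to be the main obstacle is the estimate $|\partial^\beta h|\lesssim h|\xi|^{-|\beta|}$. With only the crude bound $|\partial^\beta h|\lesssim|\xi|^{\gamma-|\beta|}$, differentiating $e^{-th(\xi)}$ produces factors $t|\xi|^\gamma=g(|\xi|)\cdot th(\xi)$ which are not absorbed by $e^{-th(\xi)}$: they leave a residual $g(|\xi|)^\ell$ and hence a genuine $t^{-\delta}$ (or at best polylogarithmic) loss, which would break the endpoint estimate (\ref{mal36}). What saves the argument is that (\ref{pseudo mihklin}) is far stronger than $g$ being a small power --- it forces $|(\log g)'(r)|\le C/r$, so $h=|\xi|^\gamma/g(|\xi|)$ has the derivative scaling of $|\xi|^\gamma$ and the usual heat-kernel-type bound goes through with $th(\xi)$, rather than $t|\xi|^\gamma$, playing the role of the quantity controlled by the exponential. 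The weaker hypothesis $g(r)\le Cr^\varepsilon$ is irrelevant to (\ref{mal36}) and is needed only in the $p<q$ case, to produce the comparison symbol $|\xi|^{\gamma-\varepsilon'}$ and to push the unavoidable fractional-order discrepancy into the $\varepsilon$-loss.
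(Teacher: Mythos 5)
Your proposal is correct, and it splits naturally against the paper's proof. For the $p=q$ bound $(\ref{mal36})$ you do essentially what the paper does: verify the Mikhlin condition for the symbol $e^{-t|\xi|^\gamma/g(|\xi|)}$ uniformly in $0<t\leq 1$, the key point being that $(\ref{pseudo mihklin})$ together with $g\geq 1$ gives $|\partial^\beta h|\leq C h(\xi)|\xi|^{-|\beta|}$ for $h=|\xi|^\gamma/g(|\xi|)$, so that each Fa\`a di Bruno term is controlled by $\sup_{x\geq 0}x^\ell e^{-x}$; this is exactly the paper's computation (there written radially, with the factor $\frac{tr^\gamma}{g(r)}e^{-tr^\gamma/g(r)}$ absorbed by $\sup_x xe^{-x}$). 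For the $p<q$ bound $(\ref{mal3})$ your route is genuinely different: the paper goes back to the kernel criterion of Proposition $\ref{lpsemi}$, repeats the computation of Corollary $\ref{kernel bound cor 1}$ up to the point where boundedness of $g$ would be used, inserts $e^{-r^\gamma/g(t^{-1/\gamma}r)}\leq e^{-t^{\varepsilon/\gamma}r^{\gamma-\varepsilon}}$ and a change of variables to get the kernel integral bounded by $Ct^{-C\varepsilon}$, whereas you factor the symbol as a $t$-uniform Mikhlin multiplier times the constant-$g$ symbol $e^{-\frac{c_0}{2}t|\xi|^{\gamma-\varepsilon'}}$ and invoke Proposition $\ref{lpsemi}$ with exponent $\gamma-\varepsilon'>1$ as a black box. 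Your factorization is arguably cleaner (it recycles both the constant-$g$ case and the Mikhlin computation already needed for $(\ref{mal36})$, and it makes transparent where the $\varepsilon$-loss comes from), while the paper's kernel/Young's-inequality route has the small advantage of covering the full range $1\leq p\leq q\leq\infty$: your composition needs the Mikhlin theorem on $L^q$ (or $L^p$), so the endpoints $p=1$, $q=\infty$ are not reached by it directly — one would instead check that the kernel of your auxiliary multiplier $m_1^t$ lies in $L^1$ uniformly in $t$. Since the paper only ever applies these bounds with $1<p<\infty$ and $t\leq 1$ (and its own $p=q$ argument has the same Mikhlin restriction), this caveat is minor rather than a gap.
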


\begin{proof}
We begin by showing inequality $(\ref{mal3})$.  Since $g$ is not bounded, $g$ does not satisfy equation $(\ref{partial mihklin})$, which prevents applying Corollary $\ref{kernel bound cor 1}$ directly.   By examining the proof of Corollary $\ref{kernel bound cor 1}$, we see that the boundedness of $g$ was only used to remove $g$ from the exponent of the exponential function after differentiating.  So, following the argument in Corollary $\ref{kernel bound cor 1}$, we have that  
\begin{equation}\label{mal789}\int_0^\infty \partial_r^{(n+1)}\(r^{n-1}e^{-r^\gamma/g(t^{-1/\gamma}r)}\) dr
\leq C\int_1^\infty h(r)e^{-r^\gamma/g(t^{-1/\gamma}r)},
\end{equation}
where $h(r)$ is a sum of powers of $r$ (and we used the fact that $h$ is integrable near $0$ to bound the integral between $0$ and $1$).  By assumption, $g(r)\leq Cr^\varepsilon$ for any positive $\varepsilon$, we have 
\begin{equation*}e^{-r^\gamma/g(t^{-1/\gamma}r)}\leq e^{-t^{\varepsilon/\gamma} r^{\gamma-\varepsilon}},
\end{equation*}
and then $(\ref{mal789})$ becomes
\begin{equation*}\int_0^\infty \partial_r^{(n+1)}\(r^{n-1}e^{-r^\gamma/g(t^{-1/\gamma}r)}\) dr
\leq C\int_1^\infty h(r)e^{-t^{\varepsilon/\gamma} r^{\gamma-\varepsilon}}.
\end{equation*}
Making the change of variable $r\rightarrow t^{\frac{\varepsilon}{\gamma(\gamma-\varepsilon)}}$ and then factoring the resulting powers of $t$ out of the integral, we finally get 
\begin{equation*}\int_0^\infty \partial_r^{(n+1)}\(r^{n-1}e^{-r^\gamma/g(t^{-1/\gamma}r)}\) dr
\leq Ct^{-C\varepsilon}\int_1^\infty h(r)e^{r^{\gamma-\varepsilon}}\leq Ct^{-C\varepsilon}.
\end{equation*}
This completes the proof of equation $(\ref{mal3})$.  

Turning our attention to inequality $(\ref{mal36})$, we begin by recalling the Mikhlin multiplier theorem, which states that if the operator $P$, with symbol $p$, satisfies 
\begin{equation}\label{mihklin3}|p^{(k)} (x)|\leq C|x|^{-k},
\end{equation}
for $0\leq k\leq n/2+1$, then $P$ maps $L^p(\mathbb{R}^n)$ into $L^p(\mathbb{R}^n)$ for $1<p<\infty$.  We will show that our symbol satisfies inequality $(\ref{mihklin3})$ uniformly in $t$ for $0<t\leq 1$.  

This clearly holds for the $k=0$ case, since $e^{-t|r|^\gamma/g(r)}\leq 1$.  For $k=1$, we have 
\begin{equation*}\aligned &|\partial_r (e^{-t|r|^\gamma/g(r)})|=\left\lvert|-t\(\frac{r^{\gamma-1}}{g(r)}-\frac{r^\gamma g'(r)}{g(r)^2}\)e^{-t|r|^\gamma/g(r)}\right\rvert
\\ \leq &C\frac{tr^{\gamma-1}}{g(r)}e^{-t|r|^\gamma/g(r)}\leq C\(\frac{tr^{\gamma}}{g(r)}e^{-t|r|^\gamma/g(r)}\)r^{-1}\leq C\(\sup_{x\geq 0} \(xe^{-x}\)\)r^{-1},
\endaligned
\end{equation*}
where we used that $g(r)$ is bounded below by 1 and $(\ref{pseudo mihklin})$.  Since $\sup_{x\geq 0} \(xe^{-x}\)$ is finite, we have finished the $k=1$ case.  Higher $k$ are bounded in an analogous fashion.  Specifically, 
\begin{equation*}\aligned &|(\partial_r)^k (e^{-t|r|^\gamma/g(r)})|\leq C\(\sup_{x\geq 0} \(\sum_{j=1}^k x^je^{-x}\)\)r^{-k}\leq Cr^{-k}.
\endaligned
\end{equation*}
This completes the proof of the Lemma.
\end{proof}

Now that we have established $L^p-L^q$ boundedness for $e^{t\mathcal{L}^g_\gamma}$, we turn our attention to Sobolev space bounds.  We will rely heavily on the following, which is Proposition $7.2$ from Chapter $13$ in \cite{T3}.  
\begin{proposition}\label{taylor prop}Let $e^{tA}$ be a holomorphic semigroup on a Banach space $X$.  Then, for $t>0$, 
\begin{equation*}\|Ae^{tA}f\|_X\leq \frac{C}{t}\|f\|_X,
\end{equation*}
for $0<t\leq 1$.  
\end{proposition}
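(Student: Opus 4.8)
The plan is to invoke the defining features of a holomorphic semigroup together with a Cauchy-integral estimate. Recall that $e^{zA}$ extends to an operator-valued holomorphic function on a sector $\Sigma_\delta=\{z\in\mathbb{C}\setminus\{0\}:|\arg z|<\delta\}$ for some $\delta>0$, with $\tfrac{d}{dz}e^{zA}=Ae^{zA}$ and $e^{zA}f\in D(A)$ for every $f\in X$ and $z\in\Sigma_\delta$, and that this extension is uniformly bounded on each truncated subsector. Accordingly, I would first fix $\delta'\in(0,\delta)$ and a constant $M$ with $\|e^{zA}\|_{X\to X}\le M$ for all $z$ in the truncated subsector $\{z\in\Sigma_{\delta'}:|z|\le 2\}$; such an $M$ exists by the local boundedness built into the definition.

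Next, for a fixed $t\in(0,1]$, I would choose the radius $r=ct$ with $c=\tfrac12\sin\delta'$, so that the circle $\Gamma_t=\{z:|z-t|=r\}$ lies entirely inside $\Sigma_{\delta'}$ (its distance $t\sin\delta'$ to each bounding ray exceeds $r$, and its distance $t$ to the origin exceeds $r$) and inside $\{|z|\le 2\}$ (since $|z|\le t+r\le 2t\le 2$). The Cauchy integral formula for the holomorphic map $z\mapsto e^{zA}f$ then gives
\[
Ae^{tA}f=\left.\frac{d}{dz}\right|_{z=t}e^{zA}f=\frac{1}{2\pi i}\int_{\Gamma_t}\frac{e^{zA}f}{(z-t)^2}\,dz .
\]
Estimating this contour integral is immediate: the length of $\Gamma_t$ is $2\pi r$, one has $|z-t|^2=r^2$ on $\Gamma_t$, and $\|e^{zA}f\|_X\le M\|f\|_X$ there, so
\[
\|Ae^{tA}f\|_X\le\frac{1}{2\pi}\cdot 2\pi r\cdot\frac{M\|f\|_X}{r^2}=\frac{M}{r}\|f\|_X=\frac{M}{c}\cdot\frac{\|f\|_X}{t},
\]
which is the asserted bound with $C=M/c$.

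The main point requiring care is the geometry: I must verify that $\Gamma_t$ never leaves the region where the uniform bound $M$ holds, which is exactly what forces the constraint $c<\sin\delta'$ and where the hypothesis $t\le 1$ enters (to keep $|z|\le 2$). A secondary point is the justification that the operator-valued Cauchy formula applies and produces $Ae^{zA}$ as the complex derivative; this is part of holomorphicity of the semigroup together with the standard fact that $e^{zA}$ maps $X$ into $D(A)$ for $z$ in the open sector. If one prefers to avoid appealing to this, one can instead start from the sectorial contour representation $e^{tA}=\frac{1}{2\pi i}\int_\Gamma e^{t\lambda}(\lambda-A)^{-1}\,d\lambda$, differentiate under the integral sign to get $Ae^{tA}=\frac{1}{2\pi i}\int_\Gamma \lambda e^{t\lambda}(\lambda-A)^{-1}\,d\lambda$, and then estimate using the resolvent bound $\|(\lambda-A)^{-1}\|\le C|\lambda|^{-1}$ on $\Gamma$ after rescaling the contour by $1/t$; this again yields the bound $C/t$.
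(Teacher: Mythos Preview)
Your argument is correct and is the standard Cauchy-integral proof of this classical fact. Note, however, that the paper does not supply its own proof of this proposition at all: it is simply quoted as Proposition~7.2 from Chapter~13 of Taylor's text \cite{T3} and used as a black box. So there is nothing to compare against; your write-up would serve as a self-contained justification where the paper relies on a citation. The alternative route you sketch at the end (via the sectorial contour representation and the resolvent bound) is also standard and is essentially the approach taken in many semigroup references, including Taylor's.
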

For our purposes, $A=\mathcal{L}^g_\gamma$ and $X=L^p(\mathbb{R}^n)$.  To use this proposition, we need to know that $e^{t\mathcal{L}^g_\gamma}$ is a holomorphic semigroup, and following the proof of Proposition $7.1$ from Chapter 13 in \cite{T3}, we see that we only need $e^{t\mathcal{L}^g_\gamma}$ to be uniformly bounded from $L^p(\mathbb{R}^n)$ into itself, which we established earlier in this section.  Now we are ready to prove the following.

\begin{proposition}\label{sobolev} Let $1<p<\infty$, $s_1\leq s_2$ and define $G$ to be the Fourier multiplier with symbol $g$.  If $G$ satisfies  
\begin{equation}\label{mal 2 2}\|G f\|_{L^p}\leq C\|f\|_{L^p},
\end{equation} for all $f\in L^p(\mathbb{R}^n)$, then $e^{t\mathcal{L}^g_\gamma}: H^{s_1,p}(\mathbb{R}^n)\rightarrow H^{s_2,p}(\mathbb{R}^n)$ and 
\begin{equation}\|e^{t\mathcal{L}^g_\gamma} f\|_{H^{s_2,p}}\leq t^{-(s_2-s_1)/\gamma}\|f\|_{H^{s_1,p}}.
\end{equation}
\end{proposition}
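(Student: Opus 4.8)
The plan is to reduce the Sobolev space estimate to two pieces: the smoothing gain of $(s_2-s_1)/\gamma$ derivatives, which comes from Proposition \ref{taylor prop} applied to powers of the generator $\mathcal{L}^g_\gamma$, and an $L^p$-boundedness statement obtained by interpolating integer powers against the semigroup property. First I would record that $\|f\|_{H^{s,p}} \sim \|(1-\triangle)^{s/2} f\|_{L^p}$ and that $(1-\triangle)^{s/2}$ commutes with $e^{t\mathcal{L}^g_\gamma}$ since both are Fourier multipliers; hence it suffices to prove $\|(1-\triangle)^{(s_2-s_1)/2} e^{t\mathcal{L}^g_\gamma} h\|_{L^p} \leq C t^{-(s_2-s_1)/\gamma}\|h\|_{L^p}$ for all $h \in L^p(\mathbb{R}^n)$, where I have set $h = (1-\triangle)^{s_1/2} f$.

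Next I would treat the case where $\sigma := (s_2-s_1)/\gamma$ is a nonnegative integer $m$. Write $e^{t\mathcal{L}^g_\gamma} = (e^{(t/m)\mathcal{L}^g_\gamma})^m$ and, using Proposition \ref{taylor prop} with $A = \mathcal{L}^g_\gamma$ and $X = L^p(\mathbb{R}^n)$ (legitimate because $e^{t\mathcal{L}^g_\gamma}$ is a holomorphic semigroup on $L^p$, as noted just before the statement, its uniform $L^p$-boundedness having been established in Lemma \ref{lpsemi g} and Corollary \ref{kernel bound cor 1}), obtain $\|(\mathcal{L}^g_\gamma)^m e^{t\mathcal{L}^g_\gamma} h\|_{L^p} \leq C t^{-m}\|h\|_{L^p}$. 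It then remains to compare $(\mathcal{L}^g_\gamma)^m$ with $(1-\triangle)^{m\gamma/2}$: the composite operator $(1-\triangle)^{m\gamma/2}(\mathcal{L}^g_\gamma)^{-m}$ has symbol $(1+|\xi|^2)^{m\gamma/2} g(|\xi|)^m / |\xi|^{m\gamma}$, which I would bound on $L^p$ by splitting into low frequencies (where it is a bounded smooth multiplier, using that $g \geq 1$ and $g$ is, say, controlled near the origin) and high frequencies (where it behaves like $g(|\xi|)^m$, and the hypothesis $(\ref{mal 2 2})$ that $G$ is $L^p$-bounded — combined with the Mikhlin-type control on derivatives of $g$ used throughout this section — gives $L^p$-boundedness of $G^m$). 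This yields $\|(1-\triangle)^{m\gamma/2} e^{t\mathcal{L}^g_\gamma} h\|_{L^p} \leq C t^{-m}\|h\|_{L^p}$, i.e. the claimed estimate for $s_2 - s_1 = m\gamma$; the case $s_2 = s_1$ is just the uniform $L^p$-boundedness.

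Finally, for general real $\sigma \in [0,\infty)$, I would interpolate. Pick an integer $m$ with $m\gamma \geq s_2 - s_1$; then the operator $T_t := t^{\,\sigma}(1-\triangle)^{(s_2-s_1)/2} e^{t\mathcal{L}^g_\gamma}$ is, for fixed $t \in (0,1]$, uniformly bounded on $L^p$ both at the endpoint $s_2 - s_1 = 0$ (with bound $C$) and at $s_2 - s_1 = m\gamma$ (with bound $C$), so complex interpolation in the smoothing parameter gives the uniform bound $\|T_t\|_{L^p \to L^p} \leq C$ for every intermediate value, which is exactly $\|e^{t\mathcal{L}^g_\gamma} f\|_{H^{s_2,p}} \leq C t^{-(s_2-s_1)/\gamma}\|f\|_{H^{s_1,p}}$. (Alternatively one can avoid interpolation by writing the gain as $(1-\triangle)^{(s_2-s_1)/2} e^{t\mathcal{L}^g_\gamma} = \big[(1-\triangle)^{(s_2-s_1)/2}(\mathcal{L}^g_\gamma)^{-\sigma}\big]\big[(\mathcal{L}^g_\gamma)^{\sigma} e^{t\mathcal{L}^g_\gamma}\big]$ and controlling the fractional power $(\mathcal{L}^g_\gamma)^\sigma e^{t\mathcal{L}^g_\gamma}$ directly by a subordination formula, but the interpolation route is cleaner.) The main obstacle I anticipate is not the smoothing estimate — Proposition \ref{taylor prop} hands that over — but verifying the $L^p$-boundedness of the symbol comparison $(1+|\xi|^2)^{m\gamma/2} g(|\xi|)^m/|\xi|^{m\gamma}$, in particular that powers of $G$ remain $L^p$-bounded and that the low-frequency singularity of $|\xi|^{-m\gamma}$ is genuinely cancelled; this is where the precise hypothesis $(\ref{mal 2 2})$ and the standing derivative bounds on $g$ from this section do the real work.
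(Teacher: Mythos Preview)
Your strategy is the same as the paper's---Proposition~\ref{taylor prop} for the smoothing, the hypothesis~(\ref{mal 2 2}) for the symbol comparison, then interpolation---but the low-frequency step in your comparison is wrong. The symbol $(1+|\xi|^2)^{m\gamma/2} g(|\xi|)^m/|\xi|^{m\gamma}$ is \emph{not} a bounded multiplier near $\xi=0$: as $|\xi|\to 0$ we have $(1+|\xi|^2)^{m\gamma/2}\to 1$ and $g(|\xi|)^m\to g(0)^m\ge 1$, so the whole thing blows up like $|\xi|^{-m\gamma}$. Nothing cancels that singularity, and in fact $(\mathcal{L}^g_\gamma)^{-m}$ is not even well-defined on $L^p$. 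Your closing remark that the singularity ``is genuinely cancelled'' is exactly the point that fails.

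The paper sidesteps this by using the \emph{homogeneous} Laplacian in the base case. It treats the single endpoint $s_2-s_1=\gamma$ (not integer multiples) via
\[
\|(-\triangle)^{\gamma/2} e^{t\mathcal{L}^g_\gamma} f\|_{L^p}
= \big\|\,G\;\mathcal{L}^g_\gamma e^{t\mathcal{L}^g_\gamma} f\,\big\|_{L^p}
\le C\|\mathcal{L}^g_\gamma e^{t\mathcal{L}^g_\gamma} f\|_{L^p}
\le Ct^{-1}\|f\|_{L^p},
\]
since the comparison operator $(-\triangle)^{\gamma/2}/\mathcal{L}^g_\gamma$ has symbol \emph{exactly} $g(|\xi|)$, with no singularity whatsoever, and that is $L^p$-bounded by~(\ref{mal 2 2}). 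Interpolating this against the uniform $L^p\to L^p$ bound then gives the inhomogeneous estimate for all $s_1\le s_2$. Your argument is easily repaired the same way: either replace $(1-\triangle)^{m\gamma/2}$ by $(-\triangle)^{m\gamma/2}$ in the comparison (then the symbol is $g^m$, bounded by iterating~(\ref{mal 2 2})), or handle low frequencies separately via the uniform $L^p$ bound on the semigroup rather than trying to invert $\mathcal{L}^g_\gamma$ there.
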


We first establish this result in the case $s_2=\gamma$ and $s_1=0$.  Using Proposition $\ref{taylor prop}$, we have 
\begin{equation*}\|e^{t\mathcal{L}^g_\gamma} f\|_{\dot{H}^{\gamma, p}}=\|(-\triangle)^{\gamma/2}\frac{\mathcal{L}^g_\gamma}{\mathcal{L}^g_\gamma} e^{t\mathcal{L}^g_\gamma} f\|_{L^p}\leq \|\mathcal{L}^g_\gamma e^{t\mathcal{L}^g_\gamma} f\|_{L^p}\leq t^{-1}\|f\|_{L^p},
\end{equation*}
provided the operator $P=\frac{(-\triangle)^{\gamma/2}}{\mathcal{L}_\gamma}$ is bounded from $L^p(\mathbb{R}^n)$ to itself.  Since the symbol for $P$ is given by $\frac{|\xi|^{\gamma}}{|\xi|^\gamma/g(|\xi|)}=g(|\xi|)$, this follows directly from the assumption on $g$.  Standard interpolation and duality arguments extend this result to the general case of $s_1\leq s_2$.

We again state the parallel result for the special case where $g$ is, essentially, a logaritm.
\begin{lemma}\label{sobolev log}Let $1<p<\infty$, $s_1\leq s_2$, let $g(r)\leq Cr^{\varepsilon}$ for any $\varepsilon>0$ and let $|g^{(k)}(r)|\leq C|r|^{-k}$ for all $1\leq k\leq n/2+1$.   Then $e^{t\mathcal{L}_\gamma}: H^{s_1,p}(\mathbb{R}^n)\rightarrow H^{s_2,p}(\mathbb{R}^n)$ and 
\begin{equation}\|e^{t\mathcal{L}_\gamma} f\|_{H^{s_2,p}}\leq t^{-(s_2-s_1)/(\gamma-\varepsilon)}\|f\|_{H^{s_1,p}},
\end{equation}
for any $\varepsilon>0$.
\end{lemma}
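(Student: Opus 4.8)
The plan is to prove the estimate directly at the level of Fourier multipliers via the Mikhlin multiplier theorem, running the argument that establishes $(\ref{mal36})$ in Lemma $\ref{lpsemi g}$ but carrying along an extra Bessel-potential factor. First I would reduce the statement to a single multiplier bound. Put $\sigma=s_2-s_1\geq 0$ and $h=(1-\triangle)^{s_1/2}f$, so $\|h\|_{L^p}=\|f\|_{H^{s_1,p}}$; since everything in sight is a Fourier multiplier and hence commutes,
\begin{equation*}\|e^{t\mathcal{L}_\gamma}f\|_{H^{s_2,p}}=\|(1-\triangle)^{s_2/2}e^{t\mathcal{L}_\gamma}(1-\triangle)^{-s_1/2}h\|_{L^p}=\|M_th\|_{L^p},\end{equation*}
where $M_t$ is the Fourier multiplier with symbol $m_t(\xi)=(1+|\xi|^2)^{\sigma/2}e^{-t|\xi|^\gamma/g(|\xi|)}$. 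It therefore suffices to show that $M_t:L^p(\mathbb{R}^n)\rightarrow L^p(\mathbb{R}^n)$, $1<p<\infty$, with operator norm at most $Ct^{-\sigma/(\gamma-\varepsilon)}$ for $0<t\leq 1$. Note this reduction treats all real $s_1\leq s_2$ at once, so (unlike in Proposition $\ref{sobolev}$) no interpolation or duality step is required.

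To bound $\|M_t\|_{L^p\rightarrow L^p}$ I would verify the Mikhlin condition $|\partial_\xi^\alpha m_t(\xi)|\leq Ct^{-\sigma/(\gamma-\varepsilon)}|\xi|^{-|\alpha|}$ for all $\xi\neq 0$ and $|\alpha|\leq n/2+1$, uniformly in $t\in(0,1]$. Write $\phi(\xi)=|\xi|^\gamma/g(|\xi|)$. From $g\geq 1$ and $|g^{(k)}(r)|\leq Cr^{-k}$ for $1\leq k\leq n/2+1$ one gets, by the same Leibniz- and chain-rule computations already performed in Corollary $\ref{kernel bound cor 1}$ and in the proof of $(\ref{mal36})$, the bounds $|\partial^\beta\phi(\xi)|\leq C\phi(\xi)|\xi|^{-|\beta|}$ and hence $|\partial^\beta e^{-t\phi}|\leq Ce^{-t\phi(\xi)}\big(\sum_{j=0}^{|\beta|}(t\phi(\xi))^j\big)|\xi|^{-|\beta|}$ for $|\beta|\leq n/2+1$; for the Bessel factor one has the standard estimate $|\partial^\beta(1+|\xi|^2)^{\sigma/2}|\leq C(1+|\xi|^2)^{\sigma/2}|\xi|^{-|\beta|}$. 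Differentiating $m_t$ by the Leibniz rule and writing $e^{-t\phi}=e^{-t\phi/2}e^{-t\phi/2}$, the claim reduces to $(1+|\xi|^2)^{\sigma/2}e^{-t\phi(\xi)/2}\big(\sum_{j}(t\phi(\xi))^j\big)\leq Ct^{-\sigma/(\gamma-\varepsilon)}$. The polynomial factor is absorbed using $\sup_{x\geq 0}x^je^{-x/2}<\infty$. For $|\xi|\geq 1$ the hypothesis $g(r)\leq Cr^\varepsilon$ forces $\phi(\xi)\geq c|\xi|^{\gamma-\varepsilon}$, so $(1+|\xi|^2)^{\sigma/2}e^{-t\phi(\xi)/2}\leq C|\xi|^\sigma e^{-(c/2)t|\xi|^{\gamma-\varepsilon}}\leq Ct^{-\sigma/(\gamma-\varepsilon)}$ after the substitution $u\mapsto t^{1/(\gamma-\varepsilon)}|\xi|$. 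For $0<|\xi|<1$ every factor is bounded and $\phi(\xi)\leq|\xi|^\gamma\leq 1$, so the expression is $O(1)\leq t^{-\sigma/(\gamma-\varepsilon)}$ (using $t\leq 1$, $\sigma\geq 0$); one also checks directly there, from $|\partial^\beta\phi|\leq C|\xi|^{\gamma-|\beta|}$ and $t^j|\xi|^{j\gamma}\leq 1$, that $|\partial^\beta e^{-t\phi}|\leq C|\xi|^{-|\beta|}$, so the Mikhlin bound survives near the origin despite the blowup of the derivatives of $|\xi|^\gamma$. The Mikhlin multiplier theorem then gives $\|M_t\|_{L^p\rightarrow L^p}\leq Ct^{-\sigma/(\gamma-\varepsilon)}$, and unwinding the reduction finishes the proof.

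I expect the main obstacle to be the bookkeeping in the second step: after the Leibniz rule one has a sum of terms where some derivatives landed on $(1+|\xi|^2)^{\sigma/2}$ (producing the growth $|\xi|^\sigma$) and the rest on $e^{-t\phi}$ (producing a polynomial in $t\phi(\xi)$ times $e^{-t\phi}$), and one must split the single exponential so that one half converts $|\xi|^\sigma$ into the $t^{-\sigma/(\gamma-\varepsilon)}$ scaling via $g(r)\leq Cr^\varepsilon$ and the other half absorbs the polynomial, all uniformly in $t\in(0,1]$ and on both frequency ranges $|\xi|\geq1$ and $|\xi|<1$. An alternative that stays closer to the proof of Proposition $\ref{sobolev}$ would factor $(1-\triangle)^{(\gamma-\varepsilon)/2}=R\,\mathcal{L}_\gamma$, with $R$ the multiplier of symbol $g(|\xi|)|\xi|^{-\varepsilon}$, then apply Proposition $\ref{taylor prop}$ and interpolate and dualize; but $R$ is not $L^p$-bounded near $\xi=0$, so the low-frequency part would still have to be handled separately, which makes the direct multiplier argument cleaner.
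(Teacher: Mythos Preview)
Your argument is correct and takes a genuinely different route from the paper. The paper follows exactly the ``alternative'' you sketch in your last paragraph: it writes $(-\triangle)^{(\gamma-\varepsilon)/2}=P\,\mathcal{L}_\gamma$ with $P$ the multiplier of symbol $|\xi|^{-\varepsilon}g(|\xi|)$, asserts this symbol is a Mikhlin multiplier, invokes Proposition~\ref{taylor prop} (the holomorphic-semigroup bound $\|\mathcal{L}_\gamma e^{t\mathcal{L}_\gamma}f\|_{L^p}\leq Ct^{-1}\|f\|_{L^p}$) to get the endpoint $s_2-s_1=\gamma-\varepsilon$, and then interpolates and dualizes. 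Your direct verification of the Mikhlin condition for the full symbol $m_t(\xi)=(1+|\xi|^2)^{\sigma/2}e^{-t|\xi|^\gamma/g(|\xi|)}$ avoids the semigroup machinery and the interpolation/duality step entirely, at the cost of more explicit derivative bookkeeping. Your low-frequency objection to the paper's route is well taken: $|\xi|^{-\varepsilon}g(|\xi|)$ is indeed unbounded near $\xi=0$ (since $g\geq 1$), so the Mikhlin claim as written is only valid at high frequency; the paper glosses over this, though it is harmless once one pairs the homogeneous $\dot H^{\gamma-\varepsilon,p}$ bound with the uniform $L^p\to L^p$ bound $(\ref{mal36})$ already established. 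Your approach sidesteps this issue by handling $|\xi|<1$ directly.
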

The proof of this lemma is almost identical to the proof of Proposition $\ref{sobolev}$.  With the notation used at the beginning of that argument, we have  
\begin{equation*}\|e^{t\mathcal{L}_\gamma} f\|_{\dot{H}^{\gamma-\varepsilon, p}}=\|(-\triangle)^{(\gamma+\varepsilon)/2}\frac{\mathcal{L}_\gamma}{\mathcal{L}_\gamma} e^{t\mathcal{L}_\gamma} f\|_{L^p}\leq \|\mathcal{L}_\gamma e^{t\mathcal{L}_\gamma} f\|_{L^p}\leq t^{-1}\|f\|_{L^p},
\end{equation*}
where we again need the operator $P=\frac{(-\triangle)^{\gamma/2}}{\mathcal{L}_\gamma}$ to be bounded from  $L^p(\mathbb{R}^n)$ to itself.  In this case, the symbol for $P$ is given 
\begin{equation*}P\frac{|\xi|^{\gamma-\varepsilon}}{|\xi|^\gamma/g(|\xi|)}=|\xi|^{-\varepsilon} g(|\xi|),
\end{equation*}
and a straightforward calculation shows that the assumptions on $g$ make this symbol a Mihklin multiplier, and so the operator is $L^p(\mathbb{R}^n)$ bounded.  Standard interpolation and duality arguments extend this result to the general case of $s_1\leq s_2$.

The following is direct combination of Proposition $\ref{lpsemi}$ and Proposition $\ref{sobolev}$.

\begin{proposition}\label{heat kernel bound} Let $1<p<\infty$, $s_1\leq s_2$, $p_1\leq p_2$ and let $g$ satisfy the assumptions required by Proposition $\ref{lpsemi}$ and Proposition $\ref{sobolev}$.  Then $e^{t\mathcal{L}_\gamma}: H^{s_1,p_1}(\mathbb{R}^n)\rightarrow H^{s_2,p_2}(\mathbb{R}^n)$ and 
\begin{equation}\|e^{t\mathcal{L}_\gamma} f\|_{H^{s_2,p_2}}\leq t^{-(s_2-s_1+n/p_1-n/p_2)/\gamma}\|f\|_{H^{s_1,p_1}}.
\end{equation}
\end{proposition}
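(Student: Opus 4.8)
The plan is to interpolate between the two estimates already in hand, so that one collects the time-decay contributions from the $L^{p_1}\to L^{p_2}$ step and the $H^{s_1}\to H^{s_2}$ step without double-counting. Concretely, I would first write $e^{t\mathcal{L}_\gamma}=e^{(t/2)\mathcal{L}_\gamma}\circ e^{(t/2)\mathcal{L}_\gamma}$, using the semigroup property of the Fourier multiplier with symbol $e^{-t|\xi|^\gamma/g(|\xi|)}$ (which is immediate since the symbols multiply). Then I would apply Proposition~\ref{sobolev} to the first factor and Proposition~\ref{lpsemi} to the second factor, or vice versa; I need to decide the order so that the Sobolev-regularity gain is applied at a fixed Lebesgue exponent and the Lebesgue-exponent gain is applied at a fixed regularity.

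The cleanest route: apply the Sobolev estimate first on $H^{s_1,p_1}$, landing in $H^{s_2,p_1}$ with a factor $(t/2)^{-(s_2-s_1)/\gamma}$, and then apply the $L^p$--$L^q$-type estimate. The subtlety is that Proposition~\ref{lpsemi} as stated is an $L^p\to L^q$ bound, not an $H^{s,p}\to H^{s,q}$ bound. However, $e^{t\mathcal{L}_\gamma}$ commutes with $(-\triangle)^{s_2/2}$ (both are Fourier multipliers), so $\|e^{(t/2)\mathcal{L}_\gamma} h\|_{H^{s_2,p_2}} = \|e^{(t/2)\mathcal{L}_\gamma}(-\triangle)^{s_2/2}h\|_{L^{p_2}}$ up to the lower-order piece handled by the boundedness of $e^{t\mathcal{L}_\gamma}$ on $L^{p_2}$ from Proposition~\ref{lpsemi} (the $p=q$ case), and then Proposition~\ref{lpsemi} with exponents $p_1\le p_2$ gives the factor $(t/2)^{-(n/p_1-n/p_2)/\gamma}$ applied to $\|(-\triangle)^{s_2/2}h\|_{L^{p_1}} \sim \|h\|_{H^{s_2,p_1}}$. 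Chaining with $h = e^{(t/2)\mathcal{L}_\gamma}f$ and multiplying the two time factors $(t/2)^{-(s_2-s_1)/\gamma}$ and $(t/2)^{-(n/p_1-n/p_2)/\gamma}$ yields exactly $t^{-(s_2-s_1+n/p_1-n/p_2)/\gamma}$ up to an absolute constant absorbed into $C$.

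I expect the main (minor) obstacle to be bookkeeping around the inhomogeneous versus homogeneous norms: Proposition~\ref{sobolev} and Proposition~\ref{lpsemi} are stated for inhomogeneous $H^{s,p}$ and $L^p$ respectively, and splitting $\|\cdot\|_{H^{s,p}} \approx \|\cdot\|_{L^p} + \|(-\triangle)^{s/2}\cdot\|_{L^p}$ must be done carefully so that the low-frequency piece does not generate a spurious negative power of $t$ — but since $e^{t\mathcal{L}_\gamma}$ is uniformly $L^p$-bounded on $(0,1]$ by Proposition~\ref{lpsemi}, every low-frequency term is controlled by a $t$-independent constant, and for $t\le 1$ we have $t^{-a}\ge 1$ for $a\ge 0$, so these terms are harmlessly dominated by the stated bound. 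The only hypotheses needed are precisely those required by Proposition~\ref{lpsemi} (equation~(\ref{mal condition})) and Proposition~\ref{sobolev} (equation~(\ref{mal 2 2})), which are exactly what the statement assumes. No genuinely new estimate is required; the proposition is a composition of two already-proved ones together with the semigroup and commutativity properties of the multiplier.
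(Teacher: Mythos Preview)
Your proposal is correct and is precisely the ``direct combination'' the paper refers to: the paper's entire proof is the one-line remark that Proposition~\ref{heat kernel bound} follows by combining Proposition~\ref{lpsemi} and Proposition~\ref{sobolev}, and your semigroup splitting $e^{t\mathcal{L}_\gamma}=e^{(t/2)\mathcal{L}_\gamma}\circ e^{(t/2)\mathcal{L}_\gamma}$ together with the commutation of Fourier multipliers is the standard way to make that combination explicit. Your attention to the inhomogeneous low-frequency piece is more careful than the paper itself, but the approach is the same.
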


We also record the analogous result for our special case.
\begin{lemma}\label{heat kernel bound log}Let $1<p<\infty$, $s_1\leq s_2$, $p_1\leq p_2$, $g(r)\leq Cr^{\varepsilon}$ for any $\varepsilon>0$, and let $|g^{(k)}(r)|\leq C|r|^{-k}$ for all $1\leq k\leq n/2+1$. .  Then $e^{t\mathcal{L}_\gamma}: H^{s_1,p_1}(\mathbb{R}^n)\rightarrow H^{s_2,p_2}(\mathbb{R}^n)$ and 
\begin{equation}\|e^{t\mathcal{L}_\gamma} f\|_{H^{s_2,p_2}}\leq t^{-(s_2-s_1+n/p_1-n/p_2)/(\gamma-\varepsilon)}\|f\|_{H^{s_1,p_1}},
\end{equation}
for any small $\varepsilon>0$.
\end{lemma}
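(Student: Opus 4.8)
The plan is to obtain Lemma~\ref{heat kernel bound log} by composing the two special-case estimates already proved for the logarithmic weight, exactly as Proposition~\ref{heat kernel bound} was obtained from Proposition~\ref{lpsemi} and Proposition~\ref{sobolev}. The semigroup property $e^{t\mathcal{L}_\gamma}=e^{(t/2)\mathcal{L}_\gamma}e^{(t/2)\mathcal{L}_\gamma}$ lets us split the job: first use one factor to move from the lower integrability $p_1$ to the higher integrability $p_2$, paying an $L^{p_1}$--$L^{p_2}$ price, and then use the other factor to gain the Sobolev regularity from $s_1$ to $s_2$, paying the smoothing price.

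Concretely, I would first apply Lemma~\ref{sobolev log} to the inner factor, written with the regularity gain spread correctly: $\|e^{(t/2)\mathcal{L}_\gamma}f\|_{H^{s_1,p_1}}\le \|f\|_{H^{s_1,p_1}}$ trivially (taking $s_2=s_1$ there), so this step is not where the regularity is produced; instead, the cleaner route is to put \emph{both} effects through a single combined inequality. So: apply Lemma~\ref{lpsemi g} composed with the derivative count. Since $e^{t\mathcal{L}_\gamma}$ commutes with $(-\triangle)^{s/2}$, write
\begin{equation*}
\|e^{t\mathcal{L}_\gamma}f\|_{H^{s_2,p_2}}=\|(-\triangle)^{s_2/2}e^{(t/2)\mathcal{L}_\gamma}e^{(t/2)\mathcal{L}_\gamma}f\|_{L^{p_2}}\le Ct^{-(n/p_1-n/p_2)/(\gamma-\varepsilon)}\|(-\triangle)^{s_2/2}e^{(t/2)\mathcal{L}_\gamma}f\|_{L^{p_1}},
\end{equation*}
using inequality~$(\ref{mal3})$ from Lemma~\ref{lpsemi g} on the outer factor (with a harmless rescaling of $t$ by $1/2$ and absorbing constants, valid for $0<t\le 1$). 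Then apply Lemma~\ref{sobolev log} to the remaining factor to control $\|(-\triangle)^{s_2/2}e^{(t/2)\mathcal{L}_\gamma}f\|_{L^{p_1}}=\|e^{(t/2)\mathcal{L}_\gamma}f\|_{\dot H^{s_2,p_1}}\lesssim \|e^{(t/2)\mathcal{L}_\gamma}f\|_{H^{s_2,p_1}}$ by $Ct^{-(s_2-s_1)/(\gamma-\varepsilon)}\|f\|_{H^{s_1,p_1}}$. Multiplying the two powers of $t$ gives the exponent $(s_2-s_1+n/p_1-n/p_2)/(\gamma-\varepsilon)$, which is the claimed bound. One must check that $g$ satisfying $g(r)\le Cr^\varepsilon$ and $|g^{(k)}(r)|\le C|r|^{-k}$ for $1\le k\le n/2+1$ is exactly the hypothesis needed by both Lemma~\ref{lpsemi g} and Lemma~\ref{sobolev log}, which it is by inspection of their statements.

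The only minor subtlety — and the step I would be most careful about — is the bookkeeping of the $\varepsilon$'s. Each of the two composed inequalities carries its own ``$\gamma-\varepsilon$'', and a priori one could worry that composing them produces $\gamma-2\varepsilon$ or worse; but since $\varepsilon>0$ is arbitrary and the claim only asserts the bound for \emph{some} (equivalently, every) small $\varepsilon>0$, one simply runs both lemmas with the same $\varepsilon$ and notes $(s_2-s_1)/(\gamma-\varepsilon)+(n/p_1-n/p_2)/(\gamma-\varepsilon)=(s_2-s_1+n/p_1-n/p_2)/(\gamma-\varepsilon)$ with no degradation. The restriction to $0<t\le 1$ is inherited verbatim from the two input results. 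Thus the proof is essentially one line: ``This follows by composing Lemma~\ref{lpsemi g} and Lemma~\ref{sobolev log} via the semigroup property, exactly as Proposition~\ref{heat kernel bound} follows from Proposition~\ref{lpsemi} and Proposition~\ref{sobolev}.''
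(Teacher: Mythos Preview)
Your proposal is correct and matches the paper's approach: the paper gives no separate proof of Lemma~\ref{heat kernel bound log}, merely recording it as ``the analogous result for our special case'' immediately after stating that Proposition~\ref{heat kernel bound} is ``a direct combination of Proposition~\ref{lpsemi} and Proposition~\ref{sobolev}.'' Your composition of Lemma~\ref{lpsemi g} and Lemma~\ref{sobolev log} via the semigroup property is exactly this combination, and your remark on the $\varepsilon$ bookkeeping is the only point requiring any care.
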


We remark that these results can be easily extended to Besov spaces (see Section 2 in \cite{besovpaper2} for an example of a similar process applied to the standard Heat kernel).  We also remark that the content of Lemma $\ref{heat kernel bound log}$ is that if $g_1$ satisfies the assumptions of Lemma $\ref{heat kernel bound log}$, then the operator $e^{t\mathcal{L}_\gamma^g}$ satisfies the same operator bounds as $e^{t\mathcal{L}_{\gamma-\varepsilon}}$ provided $\gamma-\varepsilon>1$.

\section{Higher regularity for the local existence result}\label{Higher regularity for the local existence result}
As was mentioned in the introduction, the solutions to the generalized Leray-alpha equations constructed here are smooth for all $t>0$ (provided the solution exists at time $t$).  In this section we prove that the solutions to Theorem $\ref{old style short}$ have this additional regularity and quantify the blow-up that occurs in these higher regularity norms as $t\rightarrow 0$.  We use an induction argument inspired by the results in \cite{katoinduction} for the Navier-Stokes equation.  We remark that similar results can be proven for the other theorems in this paper, but require slightly different (and in some cases much more involved) arguments.

\begin{proposition}\label{higher regularity theorem}Let $u_0\in B^{s_1}_{p,q}(\mathbb{R}^n)$ be divergence-free.   Let $u$ be a solution to the generalized Leray-alpha equation ($\ref{leray1})$  given by Theorem $\ref{old style short}$.
Then for all $r\geq s_1$ we have that $u\in \dot{C}^T_{(r-s_1)/2;r,p,q}$.
\end{proposition}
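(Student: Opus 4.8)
The plan is to run a bootstrapping argument on the fixed-point equation
\[
u(t)=e^{t\mathcal{L}_1}u_0+\int_0^t e^{(t-s)\mathcal{L}_1}W(u(s),u(s))\,ds,
\]
using the smoothing of $e^{t\mathcal{L}_1}$ from Proposition~\ref{heat kernel bound} (or Lemma~\ref{heat kernel bound log} in the logarithmic case; for readability I write $\gamma_1$ below, with the understanding that it becomes $\gamma_1-\varepsilon$ under the hypotheses of Theorem~\ref{log cor}). The induction hypothesis at stage $m$ is: $u\in\dot C^T_{a_m;r_m,p,q}$ for a regularity level $r_m$ increasing with $m$ and $a_m=(r_m-s_1)/\gamma_1$. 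The base case $m=0$ with $r_0=s_2$ is exactly the conclusion of Theorem~\ref{old style short}, and I will note that by Besov embedding and the same argument we also control the $B^{s_1}_{p,q}$-norm uniformly on $[0,T)$, so $u\in\dot C^T_{0;s_1,p,q}$ as well, giving a valid starting point even for $r$ close to $s_1$.

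First I would fix a target regularity $r>s_1$ and, for the induction step, assume $u\in\dot C^T_{a_m;r_m,p,q}$ with $s_1\le r_m<r$. I estimate the Duhamel term in $B^{r_{m+1}}_{p,q}$, where $r_{m+1}=\min\{r,\;r_m+\delta\}$ for a fixed gain $\delta\in(0,\min\{\gamma_1/2,1\})$ (the same width that made the contraction work in Section~\ref{type 1}). For the nonlinearity, $W(u,u)=(1-\mathcal{L}_2)^{-1}\div(u\otimes(1+\mathcal{L}_2)u)$, I apply Proposition~\ref{annie} to absorb the $(1-\mathcal{L}_2)^{-1}$ and $(1+\mathcal{L}_2)$ factors (a net cost of $1-\gamma_2$ derivatives) and then the standard product estimate Proposition~\ref{product est 1} together with the Sobolev embedding used in Section~\ref{type 1}, to bound $\|W(u(s),u(s))\|_{B^{r_{m+1}+1-\gamma_2}_{p^*,q}}$ (equivalently the relevant lower-regularity Besov norm) by $C\|u(s)\|_{B^{r_m+1}_{p,q}}^2$ — this is legitimate precisely because we are only asking for a gain of at most one derivative over the previous level, matching the structure of \eqref{2mal}–\eqref{product 1 term}. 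Then the semigroup bound gives a factor $|t-s|^{-\theta/\gamma_1}$ with $\theta=r_{m+1}-(r_m+1)+n/p^*-n/p<\gamma_1$, and inserting the induction bound $\|u(s)\|_{B^{r_m+1}_{p,q}}\le \|u(s)\|_{B^{r_m}_{p,q}}\cdot(\text{one more derivative})$ — more precisely, splitting $u\otimes(1+\mathcal L_2)u$ so that one factor sits at level $r_m$ (weight $s^{-a_m}$) and the other at a level costing the extra derivative relative to $r_m$, controlled by interpolation between $r_m$ and $s_2$ if needed — produces a time integral of the form $\int_0^t|t-s|^{-\theta/\gamma_1}s^{-\beta}\,ds$ with $\beta<1$ and $\theta/\gamma_1<1$, which by the Beta-function identity equals $C\,t^{1-\theta/\gamma_1-\beta}$; one checks the exponent equals exactly $-a_{m+1}=-(r_{m+1}-s_1)/\gamma_1$, closing the induction. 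The $\dot C$ (rather than merely $C$) conclusion follows because each Duhamel estimate comes with a strictly positive power of $t$ to spare, or by the usual density/approximation argument splitting $u_0$ into a smooth part plus a small remainder.

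The main obstacle is the careful accounting of which Hölder/Sobolev exponents $p^*,p_1,p_2$ and which interpolation parameters to use at each step so that (i) all the inequalities analogous to \eqref{full 2 list} remain satisfied with $r_m,r_{m+1}$ in place of $s_1,s_2$, and (ii) the singularities $|t-s|^{-\theta/\gamma_1}$ and $s^{-\beta}$ are each integrable and their product integrates to the right power of $t$. The constraint that forces $\delta<\min\{\gamma_1/2,1\}$ and $r_{m+1}\le r_m+\delta$ is what keeps $\beta<1$ at every stage; this is why the gain per step is bounded and infinitely many steps are needed to reach an arbitrary $r$, but since $r_m$ increases by a fixed amount each time, finitely many steps suffice for any given finite $r$. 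A secondary technical point is ensuring the bootstrap is anchored on an interval $(0,T)$ independent of $m$: since $T$ from Theorem~\ref{old style short} depends only on $\|u_0\|_{s_1,p,q}$ and the estimates above use only the norms already controlled by the previous stage on that same interval, no shrinking of $T$ is required, and the argument runs on all of $(0,T)$.
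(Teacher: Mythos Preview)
The core obstruction your proposal misses is exactly the one the paper's proof is designed to circumvent. In the Duhamel integral
\[
\int_0^t |t-s|^{-\theta/\gamma_1}\,s^{-\beta}\,ds,
\]
convergence near $s=0$ requires $\beta<1$. At stage $m$ of your induction, the product estimate forces at least one factor of $u$ to sit at (or near) the top level $r_m$, contributing a weight $s^{-a_m}$ with $a_m=(r_m-s_1)/\gamma_1$; the other factor contributes at least a fixed positive weight. Hence $\beta\ge a_m+c_0$ for some $c_0>0$, and once $r_m$ exceeds roughly $s_1+\gamma_1$ the integral diverges at $s=0$. Trading regularity between the two factors, or shifting more of the smoothing onto the semigroup, does not help: the total derivative count is fixed, so pushing $\beta$ down forces $\theta/\gamma_1$ up, and you then violate $\theta/\gamma_1<1$ instead. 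Your claim that the per-step gain bound $\delta<\min\{\gamma_1/2,1\}$ keeps $\beta<1$ conflates the size of the \emph{increment} $r_{m+1}-r_m$ with the size of the \emph{absolute} weight $a_m$; the latter grows without bound as $m\to\infty$.

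The paper handles this by introducing $v=t^{\delta}u$ (with $\delta>0$ large, chosen anew at each induction step), deriving a modified Duhamel representation
\[
u(t)=t^{-\delta}\int_0^t e^{(t-s)\mathcal{L}_1}s^{\delta-1}u(s)\,ds
+t^{-\delta}\int_0^t e^{(t-s)\mathcal{L}_1}s^{\delta}\,W(u(s),u(s))\,ds,
\]
with no free linear term because $v(0)=0$. The extra factor $s^{\delta}$ absorbs the singularity at $s=0$: one now only needs $\delta-\beta>-1$, which holds for $\delta$ sufficiently large regardless of how big $a_m$ has become. The paper flags this explicitly: without the $t^{\delta}$ modification one would need $(k-s_1)/\gamma_1<1$, which fails for large $k$. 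This trick (going back to Kato) is the missing ingredient in your argument.
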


Before starting the proof, recall from Theorem $\ref{old style short}$ that $s_1>0$ and that 
\begin{equation}\label{durant}\aligned \gamma_1&>1
\\ \gamma_2&>0
\\ s_2&>\gamma_2
\\ s_2-s_1&<\min\{\gamma_1/2, 1\}
\\ \gamma_1&\geq s_2-s_1+n/p+1
\endaligned
\end{equation}

\begin{proof}
We start with the solution $u$ given by Theorem $(\ref{old style short}$.  Then let $\delta>0$ be arbitrary and let $v=t^\delta u$.  We note that $v(0)=0$.  Then 
\begin{equation*}\aligned \partial_t v&=\delta t^{\delta-1} u+t^\delta \partial_t u
\\ &=\delta t^{-1} v+t^\delta (\lone u-(1-\alpha^2\ltwo)^{-1}\div(u\tensor (1-\alpha^2\ltwo)u))
\\ &=\delta t^{-1} v+ \lone v-t^{-\delta}(1-\alpha^2\ltwo)^{-1}\div(u\tensor (1-\alpha^2\ltwo)u)).
\endaligned
\end{equation*}
Applying Duhamel's principle, we get 
\begin{equation*}\aligned v&=e^{t\lone}v_0+\int_0^t e^{(t-s)\lone}s^{-1}v(s)ds+\int_0^t e^{(t-s)\lone}s^{-\delta}W^\alpha(v(s),v(s))ds
\\ &=\int_0^t e^{(t-s)\lone}v(s)ds+\int_0^t e^{(t-s)\lone}W^\alpha(v(s),v(s))ds,
\endaligned
\end{equation*}
where we recall $W^\alpha(f,g)=(1-\mathcal{L}_2)^{-1}\div(f(s)\tensor (1+\mathcal{L}_2)g(s))$ and in the last line used that $v_0=0$.  Using $v=t^\delta u$, we get 
\begin{equation*}u=t^{-\delta}\int_0^t e^{(t-s)\lone}s^{\delta-1}u(s)ds+t^{-\delta}\int_0^t e^{(t-s)\lone}s^\delta W^\alpha(u(s),u(s))ds.
\end{equation*}

The key idea here is that we can choose $\delta$ to be large enough to cancel any singularities that occur at $s=0$.  Now we are ready to set up the induction.  We have by Theorem $\ref{old style short}$ that the local solution $u$ is in $\dot{C}^T_{(s_2-s_1)/2;s_2,p,q}$, where $s_2>\gamma_2/2$ .  For induction, we assume this solution $u$ is also in $\dot{C}^T_{(k-s_1)/2;k,p,q}$ for some $k\geq s_2$, and seek to show that $u$ is in $\dot{C}^T_{a_1;k+h,\bar{p}}$, where $a_1=(k+h-s_1)/2$ and $h$ is a fixed number between $0$ and $1$ which will be chosen later.  We have  
\begin{equation}\label{hugo}\aligned &\|u\|_{B^{k+h}_{p,q}}=t^{-\delta}\(\int_0^t \|e^{(t-s)\lone} s^{\delta-1}u(s)\|_{B^{k+h}_{p,q}}ds + \int_0^t \|e^{(t-s)\lone} s^{\delta}W^\alpha(u(s))\|_{B^{k+h}_{p,q}}ds\)
\\ \leq &Ct^{-\delta}\int_0^t |t-s|^{-h/\gamma_1}s^{\delta-1}\|u(s)\|_{B^{k}_{p,q}} + t^{-\delta}\int_0^t |t-s|^{-b_1/\gamma_1}s^{\delta}\|W^\alpha(u)\|_{B^{k-1}_{\tilde{p},q}} ds,
\endaligned
\end{equation}
where $b_1=h+1+n/p-n/\tilde{p}$.

For the first term in the right hand side of $(\ref{hugo})$, we have 
\begin{equation}\label{hugo2}\aligned &t^{-\delta}\int_0^t |t-s|^{-h/\gamma_1}s^{\delta-1}\|u(s)\|_{B^{k}_{p,q}}
\\ =&t^{-\delta}\|u\|_{(k-s_1)/\gamma_1;k,p,q}\int_0^t |t-s|^{-h/\gamma_1}s^{\delta-1-(k-s_1)/\gamma_1}ds
\\ \leq &C\|u\|_{(k-s_1)/\gamma_1;k,p,q}t^{-\delta}t^{-h/\gamma_1}t^{\delta-1-(k-s_1)/\gamma_1+1}
\\ \leq &Ct^{-(k+h-s_1)/\gamma_1}\|u\|_{(k-s_1)/\gamma_1;k,p,q}
\endaligned
\end{equation}
This calculation implicitly assumes that the exponents of $|t-s|$ and $s$ in the integral are both strictly greater than negative $1$.  For $|t-s|$, this holds provided $h/\gamma_1<1$.  For $s$, it works for a sufficiently large choice of $\delta$.  We note that without modifying the PDE to include these $t^\delta$ terms, we would need $(k-s_1)/\gamma_1$ to be less than $1$, which does not hold for large $k$.

For the second piece, we start by bounding $\|W^\alpha(u)\|_{B^{k-1}_{\tilde{p},q}}$.  Using Proposition $\ref{product est 1}$, we have 
\begin{equation}\label{mal38}\aligned &\|W(u(s),u(s))\|_{B^{k-1}_{p,q}}\leq \|u\tensor (1+\mathcal{L}_2)u\|_{B^{k-\gamma_2}_{p,q}}
\\ \leq &\|u\|_{L^{p_1}}\|(1+\mathcal{L}_2)u\|_{B^{k-\gamma_2}_{p_2,q}}+\|u\|_{B^{k-\gamma_2}_{q_1, q}}\|(1+\mathcal{L}_2)u\|_{L^{q_2}}
\\ \leq &\|u\|_{B^{r_1}_{p,q}}\|u\|_{B^{k+h}_{p,q}}+\|u\|_{B^{r_2}_{p,q}}\|u\|_{B^{r_3}_{p,q}},
\endaligned 
\end{equation}
provided 
\begin{equation}\label{aldi}\aligned 1/\tilde{p}&=1/p_1+1/p_2=1/q_1+1/q_2,
\\ n/p&>r_1, h, r_2, r_3,
\\ p_1&=\frac{np}{n-r_1p},
\\ p_2&=\frac{np}{n-hp},
\\ q_1&=\frac{np}{n-r_2p},
\\ q_1&=\frac{np}{n-r_3p}.
\endaligned
\end{equation}

Using $(\ref{mal38})$ in the last term in $(\ref{hugo})$, and setting $a_2=(r_1-s_1)/\gamma_1$, $(k+h-s_1)/\gamma_1$ we have 
\begin{equation}\label{hugo3}\aligned &t^{-\delta}\int_0^t |t-s|^{-(h+1)/\gamma_1}s^{\delta}\|W^\alpha(u)\|_{H^{k-1}_{\tilde{p},q}} ds 
\\ \leq &Ct^{-\delta}\|u\|_{a_2;p, q}\|u\|_{a_1;p, q} \int_0^t |t-s|^{-(h+1+n/p-r_1-h)/\gamma_1}s^{\delta-(r_1-s_1)/\gamma_1-(k+h-s_1)/\gamma_1}ds 
\\+ &Ct^{-\delta}\|u\|^2_{(r_2-s_1)/\gamma_1;p, q} \int_0^t |t-s|^{-(h+1+n/p-r_2-r_3)/\gamma_1}s^{\delta-(k-\gamma_2+r_2-s_1)/\gamma_1-(\gamma_2+r_3-s_1)/\gamma_1}ds
\\ \leq &C\|u\|_{a_2;p, q}\|u\|_{a_1;p, q}t^{-(n/p-r_1+1)/\gamma_1-(r_1-s_1)/\gamma_1-(k+h-s_1)/\gamma_1+1} 
\\ + &C\|u\|^2_{(r_2-s_1)/\gamma_1;p, q}t^{-(h+1+n/p-r_2-r_3)/\gamma_1-(k-\gamma_2+r_2-s_1)/\gamma_1-(\gamma_2+r_3-s_1)/\gamma_1+1}
\\ \leq &C\|u\|_{a_2;p, q}\|u\|_{a_1;p, q}t^{-(n/p-s_1+1)/\gamma_1-(k+h-s_1)/\gamma_1+1} 
\\ + &C\|u\|^2_{(r_2-s_1)/\gamma_1;p, q} t^{-(1+n/p-s_1)/\gamma_1-(k+h-s_1)/\gamma_1+1}
\\ \leq &C(|u\|_{a_2;p, q}\|u\|_{a_1;p, q}+ \|u\|^2_{(r_2-s_1)/\gamma_1;p, q}) t^{-(k+h-s_1)/\gamma_1+s_2/\gamma_1},
\endaligned
\end{equation}
where the last line used, from $(\ref{durant})$, that $s_2-s_1+n/p+1<\gamma_1$ (and thus $1-(n/p-s_1+1)/\gamma_1>s_2/\gamma_1$).

Using inequalities $(\ref{hugo2})$ and $\ref{hugo3}$ in $(\ref{hugo})$, we have 
\begin{equation*}\|u\|_{B^{k+h}_{p,q}}\leq C\(\|u\|_{(k-s_1)/\gamma_1;k,p,q}+|u\|_{a_2;p, q}\|u\|_{a_1;p, q}+ \|u\|^2_{(r_2-s_1)/\gamma_1;p, q}\)t^{-(k+h-s_1)/\gamma_1+s_2/\gamma_1}.
\end{equation*}

Multiplying both sides by $t^(k+h-s_1)/\gamma_1$ completes the argument.  We remark that $\delta$ is chosen after beginning the induction step, while the appropriate value of $h$ is fixed by the parameters $n, p, s_1, s_2$.
\end{proof}

\newpage
\bibliographystyle{amsplain}
\bibliography{references}

\end{document}